\newtheorem{definition}{Definition}
\newtheorem{remark}{Remark}
\newtheorem{theorem}{Theorem}
\newtheorem{lemma}{Lemma}
\newcommand{\bea}[1]{\textcolor{black}{#1}}
\newcommand{\eps}{\varepsilon}
\DeclareMathOperator{\reach}{reach}
\DeclareMathOperator{\unp}{Unp}
\begin{document}
	\emergencystretch 3em
	
	\begin{center}
		\Large \bf  On consistent estimation of dimension values 
	\end{center}
	\normalsize
	
	\
	
	\begin{center}
		Alejandro Cholaquidis$^a$, Antonio Cuevas$^b$, Beatriz Pateiro-Lopez$^{c}$, \\
		\footnotesize	$^a$ Centro de Matem\'atica, Universidad de la Rep\'ublica, Uruguay\\
		$^b$   Departamento de Matem\'aticas, Universidad Aut\'onoma de Madrid\\ and Instituto de Ciencias Matem\'aticas ICMAT (CSIC-UAM-UCM-UC3M)\\
		$^c$ Departamento de Estat\'{\i}stica, An\'alise Matem\'atica e Optimizaci\'on, Universidade \\ de Santiago de Compostela and Galician Center for Mathematical Research\\ and Technology  (CITMAga). 
		
	\end{center}

 \begin{abstract}
 	The problem of estimating, from a random sample of points, the dimension of a compact subset  $S$
 	of the Euclidean space is considered. The emphasis is put on consistency results in the statistical sense. That is, statements of convergence to the true dimension value when the sample size grows to infinity.
 	Among the many available definitions of dimension, we have focused (on the grounds of its statistical tractability) on three notions: the Minkowski dimension, the correlation dimension and  the, perhaps less popular, concept of pointwise dimension. 
 	We prove the statistical consistency of some natural estimators of these quantities. Our proofs partially rely on the use of an instrumental estimator formulated in terms of the empirical volume function $V_n(r)$, defined as the Lebesgue measure of the set of points whose distance to the sample is at most $r$.  In particular, we explore the case in which the true volume function $V(r)$ of the target set $S$  is a polynomial on some interval starting at zero. An empirical study is also included.  Our study aims to provide some theoretical support, and some practical insights, for the problem of deciding whether or not the set $S$ has a dimension smaller than that of the ambient space. This is a major statistical motivation of the dimension studies, in connection with the so-called ``Manifold Hypothesis''.  
 \end{abstract}

 \section{Introduction}

 \sloppy
 Let us assume that we have a random sample $X_1,\ldots,X_n$ drawn from a probability distribution $P$ whose support is an (unknown) compact set $S\subset {\mathbb R}^d$.  Our aim here is to study the statistical estimation of the dimension of $S$, where this term is understood in three different senses: Minkowski, correlation and pointwise dimension. They will be defined below, alongside with the classical notion of Hausdorff dimension which remains, in several aspects, as a sort of ``golden standard'' though, unfortunately, rather unsuitable for statistical treatment. This leads to consider other ``proxy notions'' of dimension more appropriate for statistical treatment. They all agree with Hausdorff dimension in regular cases. 
 
 A major statistical motivation for studying the estimation of the dimension of $S$ is to assess whether or not this dimension is that of the ambient space.
 
 So we place ourselves in the so-called ``Manifold Hypothesis'' setting, whose starting point is the empirical observation that many multivariate data sets found in practice are in fact confined into (or close to) a lower dimensional set. \\

 \noindent \textit{On the Manifold Hypothesis and the notion of dimension}
 
 In the context of high-dimensional statistics, the so-called Manifold Hypothesis (MH) is fulfilled when a cloud of points in the Euclidean space ${\mathbb R}^d$ lies in fact in (or is close to) a set (often a manifold) ${\mathcal M}$ whose dimension  is smaller than that of the ambient space.  The case where ${\mathcal M}$ is assumed to be linear leads to the classical theory of Principal Components Analysis which is a topic routinely covered in undergraduate courses of multivariate analysis.  But we are here concerned with the general, non-linear case. 
 
 A deep study of MH, within the differential geometry framework, is given in \citet{fef16}; see Section 2 in that paper for an overview on Manifold Learning. 
 
 Many other different strategies have been proposed to address, sometimes in an indirect fashion, the MH problem. These include (the list is largely non-exhaustive):
 
 \begin{itemize}
 	\item[] Fitting lower dimensional structures (curves or surfaces) to the data cloud. 
 	\item[]  Assessing lower-dimensionality (without explicit dimension estimation or surface fitting): \citet{aar17}, \citet{gen12a}, \citet{gen12b}, \citet{gen12c}.
 	
 	\item[]  Estimating the dimension of $S$. This is the approach we will follow here. More precisely, we aim at identifying, with probability one as the sample size tends to infinity, whether the support $S$ of the underlying probability measure of the data has a  dimension smaller than that of the ambient space. So far, this topic (and different variants of it) has been perhaps more popular in the engineering journals than in the statistical ones; see, however, \citet{bri13}, and references therein, for a statistically motivated approach. Different notions of dimension have been proposed in the literature: see, e.g, \citet{bis17} for an account oriented to fractal geometry. A survey paper with a more statistical approach can be found in \citet{cam16}. See also \citet{kim19}, \citet{qiu22} and \citet{blo22} for more recent contributions. 
 	
 	The notions of dimension considered here have been chosen  in account of their statistical tractability; see below for details. 
 \end{itemize}

 The contents of this work can be summarized as follows. 
 In Section \ref{sec:back}, some background is given on a few required geometric and statistical notions. 
 Section \ref{sec:notionsdim} provides a short account of a few notions of dimension currently used, with a particular focus on the aforementioned Minkowski, correlation  and pointwise dimensions. 
 In Section \ref{sec:estimators}, we define and motivate several estimators for the Minkowski, the correlation and the pointwise dimension of $S$. Some of them have been previously considered in the literature (see \citet{keg02}, \citet{you82}). Others, expressed in terms of volume functions, are mainly introduced as auxiliary tools in our asymptotic study.    All of them depend on a suitable sequence $r_n$ of smoothing parameters. 
 The main theoretical contributions of this paper are in Section \ref{sec:consistencia}, where mentioned consistency results are established. 
 An empirical study is commented in Section \ref{sec:emp}. 
 Some final conclusions are briefly highlighted in Section \ref{sec:conclusions}.

 \section{Some geometric and statistical background. Some notation}\label{sec:back}
 
 \noindent \textit{A few basic definitions}
 
 Given a set $S\subset \mathbb{R}^d$, we will denote by
 $\mathring{S}$ and $\partial S$ the interior and boundary of $S$,
 respectively, with respect to the usual topology of $\mathbb{R}^d$. The diameter of $S$ will be denoted as $\textnormal{diam}(S)$.

 The closed ball
 in $\mathbb{R}^d$,
 of radius $\varepsilon$, centred at $x$ will be denoted by $B(x,\varepsilon)$ and $\omega_d=\mu(B(x,1))$, $\mu$ being the Lebesgue measure on ${\mathbb R}^d$. 
 With a slight notational abuse, we denote $B(S,r)$ the  $r$-parallel set of $S$, ${B(S,r)=\{x\in{\mathbb R}^d: \inf_{y\in S}\|x-y\|\leq r\} }$, $\|\cdot\|$ being the Euclidean norm in ${\mathbb R}^d$. 
 
 \sloppy 
 Given two compact non-empty sets $A,C\subset{\mathbb R}^d$, 
 the \it Hausdorff distance\/ \rm or \textit{Hausdorff-Pompei} distance
 between $A$ and $C$ is defined by
 \begin{equation*}
 	\rho_H(A,C)=\inf\{\eps>0: \mbox{such that } A\subset B(C,\eps)\, \mbox{ and }
 	C\subset B(A,\eps)\}.\label{Hausdorff}
 \end{equation*}

 The following ``standardness'' notion appears, in slightly different versions, in the set estimation literature (see, e.g., \citet{cue04}):

 Given a probability measure $P$ with support $S\subset \mathbb{R}^d$, we say that $S$ is standard with respect to $P$ if 
 \ there exist positive constants $r_0$, $\delta$ and $d'$ such that, for all $x\in S$, 
 \begin{equation} \label{estandar}
 	P(B(x,r))\geq \delta r^{d'} \mbox{ and } r\in(0,r_0).
 \end{equation}

 A useful  tool in our approach will be the \textit{volume function} of $S$, which is defined, for $r\geq 0$, by $V(S,r)=V(r)=\mu(B(S,r))$. 
 The volume function plays a relevant role in geometric measure theory, as commented below. 
 

 \
 
 \noindent \textit{Federer's reach, polynomial volume and polynomial reach}
 
 Following \citet{fed59}, let us define the \textit{reach} of $S$ as the supremum  ${\mathbf r}$ of all values $R$ such that all points in $B(S,r)$ with $r\leq R$ have a unique metric projection onto $S$. In more formal terms,    let $\unp(S)$ be the set of points $x\in \mathbb{R}^d$ 
 with a unique metric projection onto $S$.
 For $x\in S$, let $\reach(S,x)=\sup\{r>0:\mathring{B}(x,r)\subset \unp(S)\big\}$. 
 The \textit{reach} of $S$ is then defined by $\reach(S)=\inf\{\reach(S,x): x\in  S\}$, and $S$
 is said to be of positive reach if ${\mathbf r}=\reach(S)>0$. In this case, it is shown in \citet{fed59} that the volume function $V(r)$ is a polynomial on the interval $[0,{\mathbf r}]$,
 \begin{equation*}
 	V(r)=\theta_0+\theta_1r+\ldots+\theta_dr^d,\ \mbox{for all } r\in[0,\mathbf r].\label{pvol}
 \end{equation*}	
 Also, 
 the coefficients of this polynomial have a relevant geometric information on $S$: in particular, $\theta_{0}=\mu(S)$, $\theta_1$ is (outer) Minkowski measure of the boundary of  $S$, $\theta_d$ is, up to a known factor, the Euler-Poincar\'{e} characteristic of $S$ and the remaining coefficients can be interpreted in terms of curvatures.

 Still, it is important to note that a polynomial volume expression for $V(r)$ can hold even if $\reach(S)=0$. For instance it holds for the subset $S=[-1,1]^2\setminus [-1/2,1/2]^2$  of $\mathbb{R}^2$. This polynomial volume expression motivates the following definition given in \citet{ch23}, see also \citet{ch14}.

 \begin{definition}\label{def:polvol}
 	Given a compact set $S\subset {\mathbb R}^d$ with volume function $V(r)=\mu(B(S,r))$, we  define the \textit{polynomial reach} ${\mathbf R}$ of $S$ as 
 	\begin{equation*}\label{pol_reach}
 		{\mathbf R}=\sup\{R\geq 0:V(r)\mbox{ is a polynomial of degree at most $d$  on $[0,R]$}\}.
 	\end{equation*}
 \end{definition}

 %
 
 \

 \section{Different notions of  dimension}\label{sec:notionsdim}

 Many proposals have been put forward to formally define the intuitively based notion of dimension of a set $S\subset {\mathbb R}^d$. A very short, partial account is included below. We start by mentioning the Hausdorff dimension (which, in several aspects is considered as a standard reference). Then we focus on the Minkowski dimension (on account of its statistical tractability) and we consider as well the notions of correlation dimension and pointwise dimension. The statistical estimation of these quantities will be addressed in subsequent sections. 
 
 \
 
 \noindent \textit{Hausdorff dimension}
 
 The Hausdorff  dimension, first introduced by  \citet{haus19}, is perhaps the most widely recognized member of the family of fractal dimensions whose aim is to quantify the complexity of the geometry of a set, its scaling  and self-similarity properties; see \citet{fal04} for details.  Its formal definition is based on the notion of $\alpha$-dimensional Hausdorff content, given by
 \[\mathcal{H}_\infty^\alpha(S)=\inf\left\{\sum_{i} \textnormal{diam}(U_i)^\alpha:\ S \subset\bigcup_i U_i \right\},\]
 where $\{U_i\}$ is a countable collection of sets that cover $S$.
 Then, the Hausdorff dimension is defined as
 \[\dim_\textnormal{H}(S)=\inf\{\alpha:\ \mathcal{H}_\infty^\alpha(S)=0\}.\]
 
 Hausdorff dimension is, in several aspects,  a sort of  ``ideal reference'' to define dimension as it enjoys a number of desirable properties, not necessarily shared by other dimension notions. However, in practice, $\dim_\textnormal{H}(S)$ could be very difficult to compute. Thus, in practical applications, there is a case to consider other dimension notions, as those considered below, as proxies for $\dim_\textnormal{H}$.
 
 \
 
 \noindent \textit{The concept of Minkowski dimension. Some equivalent definitions}
 
 Let $S\subset \mathbb{R}^d$ be a bounded set. Define the \textit{covering number}, $N(S, r)$ to be the minimal number of sets of diameter at most  $r$ required to cover $S$.
 
 Then, the Minkowski dimension, also known as Minkowski-Bouligand dimension, Kolmogorov capacity, Box-counting dimension, or entropy dimension, is defined as 
 \begin{equation}\label{bcdim}
 	\dim(S)=\lim_{r\to 0 }\frac{\log(N(S, r))}{\log(1/r)}.
 \end{equation}
 To motivate this definition in intuitive terms, note that \eqref{bcdim} means that the dimension is the exponent $k$ such that $N(S, 1/n)\approx Cn^k$.  
 
 We will assume throughout that the limit in \eqref{bcdim} exists. If this were not the case, most results can be re-written in terms of the upper and lower Minkowski dimension, defined in terms, respectively, of the upper or lower limit in \eqref{bcdim}; see, e.g.,  \citet{fal04}.

 Some alternative, equivalent  expressions for the Minkowski dimension  can be obtained by replacing in \eqref{bcdim} the covering number $N(S,r)$ with either the \textit{packing number} $N_{\textnormal{pac}}(S,r)$ or the \textit{separating number}  $N_{\textnormal{sep}}(S,r)$ defined, respectively, as the maximum possible cardinality of a disjoint collection of closed $r$-balls with centres on $S$ and the maximum cardinality of an \textit{$r$-separated} subset of $S$ (where $X\subset S$ is said to be $r$-separated if  \(x, y \in X\) implies \(\| x - y \| \geq r\)). 
 
 The equivalence of these alternative definitions follows from the relations
 \begin{equation}\label{enes}
 	N(S,4r)\leq N_{\textnormal{sep}} (S,2r)\leq N(S,r)\  \mbox{and } N_{\textnormal{pac}}(S,r)=N_{\textnormal{sep}}(S,2r),
 \end{equation}
 which hold for any bounded set $S$ in the Euclidean space,  see \citet[p. 67]{bis17}. 
 
 A further alternative expression for $\dim(S)$ is
 \begin{equation}\label{bcdim2}
 	\dim(S)=d-\lim_{r\to 0 }\frac{\log(\mu(B(S,r)))}{\log(r)}=d-\lim_{r\to 0 }\frac{\log(V(r))}{\log(r)},
 \end{equation}
 provided that this limit exists (it can be $+\infty$). The equivalence between \eqref{bcdim} and \eqref{bcdim2} follows from \eqref{enes}  together  with the simple inequalities 
 \begin{equation}\label{mun}
 	\mu(B(S,r))\leq N(S,r)\mu(B(0,r)), \ \mu(B(S,r))\geq N_{\textnormal{sep}}(S,r)\mu(B(0,r/2)).
 \end{equation}
 
 The relationship between Minkowski dimension and Hausdorff dimension is given by
 $\dim_\textnormal{H}(S)\leq \dim(S)$,
 where strict inequality is possible. A simple example is  $\mathbb{Q}$, the set of rational numbers where $\dim_\textnormal{H}(\mathbb{Q})=0<1= \dim(\mathbb{Q})$. 
 
 \
 
 \noindent \textit{The correlation dimension}

 Another popular method for measuring some sort of fractal dimension is the \textit{correlation dimension}, introduced by \citet{gras83}; see also \citet{cam16} for a survey and \citet{pes93} for some mathematical insights. In fact, the definition below follows the formal treatment in \citet{pes93}, rather than the original formulation in \citet{gras83}. 
 
 
 Let $X_1,X_2$ be two independent and identically distributed (iid) copies of a random element $X$ in $\mathbb{R}^d$. Let us define
 \begin{equation*}\label{cd}
 	p(r)=\mathbb{P}(\|X_1 - X_2\|<r).
 \end{equation*}
 Then, the correlation dimension of the distribution of $X$, $P$, is defined as
 \begin{equation}\label{cd2}
 	\dim_\textnormal{cd}(P)=\lim_{r\to 0} \frac{\log(p(r))}{\log(r)},
 \end{equation}
 provided that this limit exists (it can be $+\infty$). Although we restrict ourselves to \( \mathbb{R}^d \), the correlation dimension is formally defined in \citet{pes93} for a general metric space.
 
 An outstanding difference between the definition of  correlation dimension in \eqref{cd2} and other notions of dimension is the presence of the probability measure $P$.  Of course, in our case, to make \eqref{cd2} comparable to other dimension notions, we will focus on probability measures $P$ whose support $S$ is the set of interest. In the Supplementary Material it is shown that in fact the limit in \eqref{cd2} is the same for all $P$ fulfilling some regularity conditions. 
 
 It is perhaps worth noting that, when the support $S$ is a compact manifold in ${\mathbb R}^d$, under some regularity conditions, the norm $\Vert\cdot\Vert$ of the ambient space can be replaced with the geodesic distance $d_S$ in $S$ still obtaining the same result in \eqref{cd2}. Indeed, since, in general 
 $d_S(x,y)\geq \Vert x-y\Vert$, it will suffice to have 
 $d_S(x,y)\leq C\Vert x-y\Vert$, for some constant $C>0$.
 This is guaranteed, under the above mentioned condition of positive reach \citep{fed59}: see, e.g., \cite[Lemma 3]{gen12b}.  

 \
 
 \noindent \textit{The pointwise dimension}
 
 The so-called pointwise dimension $\dim_\textnormal{pw}(x)$ (see \citet{you82}, \citet{cam16}) differs from Minkowski dimension $\dim(S)$ in at least two aspects: first, again, it depends on the underlying probability distribution of the data points, rather than simply on the support $S$. Second, it is defined point-by-point so that  it takes into account as well the local aspects. Thus $\dim_\textnormal{pw}(x)$ provides information about different regions of the data cloud for which the global Minkowski dimension is blind. 
 
 If $P$ is a probability measure with support $S\subset {\mathbb R}^d$, we define the pointwise dimension of $P$ at $x\in S$ as
 \begin{equation}\label{eq:pw}
 	\dim_{\textnormal{pw}}(x)=\lim_{r\to 0} \frac{\log(P(B(x,r)))}{\log(r)},
 \end{equation} 
 provided that this limit does exist (it can be $+\infty$). While, obviously, $\dim_{\textnormal{pw}}(x)$ depends on the probability $P$, it is clear that many different choices of $P$ will lead to the same results. For example, $\dim_{\textnormal{pw}}(x)$ will equal $q$ for all choices of $P$ such that $P(B(x,r))$ is of exact order $r^q$.

 If $S$ is a compact Riemannian manifold, a natural choice for $P$, aimed at giving a sort of  ``intrinsic'' pointwise dimension notion for $S$, would be the uniform distribution with respect to the volume form; see \citet[Section 1.3]{pen06}.
 
 A natural way of deriving a ``global'' notion of dimension for a set $S$ from \eqref{eq:pw} would be just defining $\dim_\textnormal{pw}(S)=\sup_{x\in S}\dim_\textnormal{pw}(x)$; see Section \ref{sec:estimators} for more discussion on this.  It can be seen \citep[Prop. 2.1]{you82} that when $P$ is a probability measure with support on $S$, $\dim_\textnormal{H}(S)\leq \dim_\textnormal{pw}(S)$. Also \citep[Th. 4.4]{you82} if $P$ is a probability measure on a compact Riemannian manifold and $\dim_\textnormal{pw}(x)=q$ almost surely, then $\dim_\textnormal{H}(S)=q$.

 \

 
 %
 
 \section{Some estimators of the considered dimension notions}\label{sec:estimators}

 Our basic aim here is to define consistent estimators  for the different notions of dimension introduced in the previous section.  All these estimators are based on a random sample $\aleph_n=\{X_1,\dots,X_n\}$ of points from a probability distribution whose support is $S\subset\mathbb{R}^d$. The consistency of these estimators will be established in Section \ref{sec:consistencia}. Here, the term ``consistency'' must be understood in the statistical sense: we want that our estimators converge (almost surely) as $n$ grows to infinity.  According to the usual paradigm in nonparametrics, all the proposed estimators will depend on a real sequence $r_n$ of smoothing parameters which must tend to zero slowly enough in order to achieve consistency. 
 
 \
 
 \noindent \textbf{The capacity estimator.}  A first natural estimator for the Minkowski dimension would result from definition \eqref{bcdim}, 
 \begin{equation}\label{dimcap_est}
 	\widehat{\dim}_{\textnormal{cap}}=-\frac{\log(N_{\textnormal{sep}}(\aleph_n,r_n))}{\log(r_n)},
 \end{equation}
 where  $N_{\textnormal{sep}}(\aleph_n,r_n)$ is the natural empirical estimator of the separating number $N_{\textnormal{sep}}(S,r)$, that is, the maximum cardinality of an $r_n$-separated set in the sample $\aleph_n$, and $r_n$ is an appropriate sequence of smoothing parameters $r_n\downarrow 0$. 
 
 This estimator was previously considered in \citet{keg02}; se also \citet{cam16}. We keep the name ``capacity estimator'' and the subindex ``cap'' in \eqref{bcdim} to keep K\'egl's notation although, according to the general notation we have followed so far (borrowed from \citet{bis17}) the sub-index ``sep'' would be  acceptable as well. In fact, the main contribution in \citet{keg02} is an efficient algorithm to calculate the estimator \eqref{dimcap_est} or, more precisely, a ``scale-dependent'' version of it. For simplicity and ease of presentation, this version will not be considered in our consistency results below which, in any case, can be easily adapted to it. 
 
 \

 \noindent \textbf{A non-parametric volume-based estimator.} 
 An alternative approach is obtained by simply plugging-in the volume function in \eqref{bcdim2} by its empirical counterpart ${V_n(r_n)=\mu(B(\aleph_n,r_n))}$,
 for some appropriate sequence of smoothing parameters $r_n\downarrow 0$. This leads to 
 \begin{equation}\label{dimvol_est}
 	\widehat{\dim}_{\textnormal{vol}}=d-\frac{\log(V_n(r_n))}{\log(r_n)}.
 \end{equation}
 
 Lemma \ref{lem1} below establishes that this estimator differs from K\'egl's estimator by less than $1/\log(r_n)$, up to known constants.

 \

 \noindent \textbf{An estimator of the correlation dimension.}  
 Let us define 
 $$\widehat{p}_n(r)=\binom{n}{2}^{-1}\sum_{i\neq j}\mathbb{I}_{\{\|X_i-X_j\|<r\}},$$ 
 where $\mathbb{I}$ denotes an indicator function.  Observe that $\mathbb{E}(\widehat{p}_n(r))=p(r)$. Then, we can consider as  estimator for the correlation dimension

 \begin{equation}\label{dimcd_est}
 	\widehat{\dim}_{\textnormal{cd}}=\frac{\log(\hat p_n(r_n))}{\log(r_n)}.
 \end{equation}

 \
 
 \noindent \textbf{A plug-in estimator of the pointwise dimension.}  An empirical version of the pointwise dimension $\dim_\textnormal{pw}$, defined in \eqref{eq:pw}, is given in a natural way by replacing the probability measure $P$ by its empirical counterpart, ${\mathbb P}_n$:
 \begin{equation}\label{estpw}
 	\widehat{\dim}_{\textnormal{pw}}(x)=  \frac{\log({\mathbb P}_n(B(x,r_n)))}{\log(r_n)}.
 \end{equation}
 \ As it follows from the discussion in Section \ref{sec:notionsdim}, a global estimator of the dimension of $S$ could be obtained from \eqref{estpw} simply defining $\widehat{\dim}_{\textnormal{pw}}(S)=\sup_{x\in S}\widehat{\dim}_{\textnormal{pw}}(x)$. However, in practice, we cannot calculate the estimator in all points of $S$. So, in the simulation outputs of Section \ref{sec:emp}, we have used  the 0.9 quantile of the values $\widehat{\dim}_{\textnormal{pw}}(X_i)$. Of course, the motivation for this is to have some protection against outliers.

 \
 
 \section{Consistency results}\label{sec:consistencia}

 \subsection{Consistency for the volume-based estimator and K\'egl's estimator}\label{sec:consistkegl}
 
 The following technical lemma establishes a relationship between K\'egl's estimator and the volume-based estimator. It will be used in our two main theorems to derive results about K\'egl's estimator using the volume-based estimator.

 \begin{lemma} \label{lem1} Let $r_n$ be a sequence such that $0<r_n<1$ and $r_n\to 0$, for any $n\in\mathbb{N}$
 	\begin{equation}\label{lem:ineq}
 		\Big|\widehat{\dim}_{\textnormal{vol}}-\widehat{\dim}_{\textnormal{cap}}+\frac{\log(\omega_d)}{\log(r_n)}  \Big|\leq -d\frac{\log(2)}{\log(r_n)}\qquad a.s.
 	\end{equation}	
 \end{lemma}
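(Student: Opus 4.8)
The plan is to collapse the left-hand side of \eqref{lem:ineq} into the logarithm of a single geometric ratio and then to bound that ratio by $2^{\pm d}$. First, using $\mu(B(0,r_n))=\omega_d r_n^d$ one has $d+\log(\omega_d)/\log(r_n)=\log(\mu(B(0,r_n)))/\log(r_n)$, so substituting the definitions \eqref{dimvol_est} and \eqref{dimcap_est} and recalling $V_n(r_n)=\mu(B(\aleph_n,r_n))$ gives the identity
\[
\widehat{\dim}_{\textnormal{vol}}-\widehat{\dim}_{\textnormal{cap}}+\frac{\log(\omega_d)}{\log(r_n)}
=\frac{1}{\log(r_n)}\log\!\left(\frac{N_{\textnormal{sep}}(\aleph_n,r_n)\,\mu(B(0,r_n))}{V_n(r_n)}\right).
\]
Since $0<r_n<1$ we have $\log(r_n)<0$ and $|\log(r_n)|=-\log(r_n)$, so it will be enough to show that the argument of the outer logarithm lies in the interval $[2^{-d},2^{d}]$.

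The upper bound $N_{\textnormal{sep}}(\aleph_n,r_n)\,\mu(B(0,r_n))/V_n(r_n)\le 2^{d}$ is just the second inequality in \eqref{mun} applied to the (finite, hence bounded) set $\aleph_n$: the balls of radius $r_n/2$ centred at a maximal $r_n$-separated subset of $\aleph_n$ are pairwise disjoint and contained in $B(\aleph_n,r_n)$, so $V_n(r_n)\ge N_{\textnormal{sep}}(\aleph_n,r_n)\,\mu(B(0,r_n/2))=2^{-d}N_{\textnormal{sep}}(\aleph_n,r_n)\,\mu(B(0,r_n))$. For the lower bound I would not quote \eqref{mun}--\eqref{enes} directly (they involve shifted radii) but instead argue by covering: if $\{X_{i_1},\dots,X_{i_m}\}$ is a maximal $r_n$-separated subset of $\aleph_n$, then $m=N_{\textnormal{sep}}(\aleph_n,r_n)$ and, by maximality, every $X_i\in\aleph_n$ satisfies $\|X_i-X_{i_j}\|<r_n$ for some $j$; hence $B(X_i,r_n)\subset B(X_{i_j},2r_n)$, so $B(\aleph_n,r_n)\subset\bigcup_{j=1}^{m}B(X_{i_j},2r_n)$ and
\[
V_n(r_n)\le m\,\mu(B(0,2r_n))=2^{d}\,N_{\textnormal{sep}}(\aleph_n,r_n)\,\mu(B(0,r_n)),
\]
which gives the ratio $\ge 2^{-d}$.

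Combining the two bounds, the logarithm in the displayed identity has absolute value at most $d\log 2$; dividing by $|\log(r_n)|$ and using $|\log(r_n)|=-\log(r_n)$ yields \eqref{lem:ineq}. The argument is essentially bookkeeping: the two points that require a little care are the sign of $\log(r_n)$ (which is why the right-hand side of \eqref{lem:ineq} is written with a minus sign and is nonetheless positive) and the fact that, for the lower estimate on the ratio, one needs the short covering argument above rather than a verbatim appeal to \eqref{mun} or \eqref{enes}.
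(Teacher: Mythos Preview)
Your proof is correct and essentially the same as the paper's: both reduce to the sandwich
\[
N_{\textnormal{sep}}(\aleph_n,r_n)\,\omega_d\,(r_n/2)^d\ \le\ V_n(r_n)\ \le\ N_{\textnormal{sep}}(\aleph_n,r_n)\,\omega_d\,(2r_n)^d,
\]
and then take logs and divide by $\log(r_n)$. The only cosmetic difference is in how the upper bound on $V_n(r_n)$ is obtained: the paper passes through $V_n(2r_n)$ and invokes \eqref{mun} together with \eqref{enes} (specifically $N(\aleph_n,2r_n)\le N_{\textnormal{sep}}(\aleph_n,r_n)$), whereas you give the underlying covering argument directly; these are two phrasings of the same geometric fact.
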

 \begin{proof} From \eqref{mun} and $N(K,4r)\leq N_{\textnormal{sep}} (K,2r)\leq N(K,r)$ (these inequalities are valid for any bounded set $K$ in the Euclidean space; see \citet[p. 67]{bis17})\bea{, we have}
 	$$N_{\textnormal{sep}}(\aleph_n,r_n)\omega_d (r_n/2)^d\leq V_n(r_n)\leq V_n(2r_n)\leq N_{\textnormal{sep}}(\aleph_n,r_n)\omega_d(2r_n)^d.$$
 	Then,
 	\begin{align*}
 		\log(\omega_d)+ d\log(r_n) -d\log(2)&\leq \log(V_n(r_n))-\log(N_{\textnormal{sep}}(\aleph_n,r_n)) \\
 		&\leq  \log(\omega_d) +d\log(r_n) +d\log(2).
 	\end{align*}
 	Dividing all terms by $-\log(r_n)$ results in
 	$$-\frac{\log(\omega_d)}{\log(r_n)}  +d\frac{\log(2)}{\log(r_n)}\leq \widehat{\dim}_{\textnormal{vol}}-\widehat{\dim}_{\textnormal{cap}}\leq  -\frac{\log(\omega_d)}{\log(r_n)}  -d\frac{\log(2)}{\log(r_n)},$$
 	from where it follows \eqref{lem:ineq}.
 \end{proof}

 The following result provides conditions for the (almost sure) consistency, as $n\to\infty$ of the estimators  \eqref{dimcap_est} and \eqref{dimvol_est}. 
 
 \begin{theorem}\label{th:general}  Let $S\subset \mathbb{R}^d$ be a compact set such that   $V(r)$ is Lipschitz in some interval $[0,\lambda]$ with $\lambda>0$. Let $\aleph_n=\{X_1,\dots,X_n\}$ be an iid sample from a distribution whose support is $S$. Let $\gamma_n=\rho_H(\aleph_n,S)$ and $r_n\to 0$ such that    $\gamma_n<r_n$ and ${\gamma_n/(V(r_n-\gamma_n)\log(r_n))\to 0}$, almost surely.	Then, 
 	
 	(a) the estimator $\widehat{\dim}_{\textnormal{vol}}$ defined in \eqref{dimvol_est} is almost surely consistent, that is
 	\begin{equation*}\label{consist-dvol}
 		\dim(S)=d-\lim_{n\to \infty }\frac{\log(V_n(r_n))}{\log(r_n)}\quad a.s.,
 	\end{equation*}
 	where $V_n(r_n)=\mu(B(\aleph_n,r_n))$ is the natural empirical estimator of $V(r_n)$.

 	(b) K\'egl's estimator $\widehat{\dim}_{\textnormal{cap}}$, defined in \eqref{dimcap_est} is almost surely consistent as well, under the same conditions for $r_n$. 
 	
 \end{theorem}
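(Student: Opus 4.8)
The plan is to establish part~(a) first and then obtain~(b) as an immediate corollary. Indeed, Lemma~\ref{lem1} gives $\bigl|\widehat{\dim}_{\textnormal{vol}}-\widehat{\dim}_{\textnormal{cap}}\bigr|\le \bigl(|\log\omega_d|+d\log 2\bigr)/(-\log r_n)$, which tends to $0$ because $r_n\to 0$; hence, under the stated conditions on $r_n$, once I have $\widehat{\dim}_{\textnormal{vol}}\to\dim(S)$ almost surely, the same convergence for $\widehat{\dim}_{\textnormal{cap}}$ follows with no extra hypothesis.

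For part~(a), I would first trap the empirical volume $V_n(r_n)=\mu(B(\aleph_n,r_n))$ between two values of the true volume function. Since $\aleph_n\subset S$ we have $B(\aleph_n,r_n)\subset B(S,r_n)$, so $V_n(r_n)\le V(r_n)$. Conversely, $\gamma_n=\rho_H(\aleph_n,S)$ together with $\aleph_n\subset S$ forces $S\subset B(\aleph_n,\gamma_n)$; combining this with $\gamma_n<r_n$ and the triangle inequality yields $B(S,r_n-\gamma_n)\subset B(\aleph_n,r_n)$, hence $V(r_n-\gamma_n)\le V_n(r_n)$. Working on the probability-one event on which the assumptions on $\gamma_n$ hold, and for $n$ large enough that $r_n<\min\{1,\lambda\}$, I then have
\begin{equation*}
0<V(r_n-\gamma_n)\le V_n(r_n)\le V(r_n),
\end{equation*}
with positivity coming from $V(r_n-\gamma_n)\ge\omega_d(r_n-\gamma_n)^d>0$, and, by the Lipschitz property of $V$ on $[0,\lambda]$ with some constant $L$, the gap $V(r_n)-V(r_n-\gamma_n)\le L\gamma_n$.

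The remaining step is logarithmic. From the sandwich and $\log(1+t)\le t$ for $t\ge 0$,
\begin{equation*}
0\le \log V(r_n)-\log V_n(r_n)\le \log\frac{V(r_n)}{V(r_n-\gamma_n)}\le \frac{V(r_n)-V(r_n-\gamma_n)}{V(r_n-\gamma_n)}\le \frac{L\gamma_n}{V(r_n-\gamma_n)}.
\end{equation*}
Dividing through by $-\log r_n>0$ turns the right-hand side into $-L\gamma_n/\bigl(V(r_n-\gamma_n)\log r_n\bigr)$, which tends to $0$ almost surely by hypothesis; hence $\log V_n(r_n)/(-\log r_n)$ and $\log V(r_n)/(-\log r_n)$ share the same limit. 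Since by \eqref{bcdim2} (and the standing assumption that this limit exists) $\log V(r_n)/(-\log r_n)\to\dim(S)-d$, I conclude that $\widehat{\dim}_{\textnormal{vol}}=d+\log V_n(r_n)/(-\log r_n)\to\dim(S)$ almost surely. When the limit in \eqref{bcdim2} does not exist, the same chain of inequalities shows that $\liminf_n$ and $\limsup_n$ of $\widehat{\dim}_{\textnormal{vol}}$ coincide with the lower and upper Minkowski dimensions of $S$.

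The computations are routine; the step that needs care is keeping $V(r_n-\gamma_n)$ — rather than $V(r_n)$ — in the denominator after applying $\log(1+t)\le t$, so that the error bound is exactly the quantity $\gamma_n/(V(r_n-\gamma_n)\log r_n)$ assumed to vanish. This is the only place where the precise form of the condition on $(r_n,\gamma_n)$ enters, alongside the role of $\gamma_n<r_n$ in ensuring $r_n-\gamma_n>0$, which in turn makes every parallel set and logarithm well defined and keeps all volumes bounded away from zero.
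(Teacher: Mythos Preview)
Your proposal is correct and follows essentially the same route as the paper: sandwich $V_n(r_n)$ between $V(r_n-\gamma_n)$ and $V(r_n)$, use the Lipschitz bound $V(r_n)-V(r_n-\gamma_n)\le L\gamma_n$ together with $\log(1+t)\le t$ to control $\log\bigl(V(r_n)/V_n(r_n)\bigr)/\log r_n$, and then deduce~(b) from~(a) via Lemma~\ref{lem1}. The only cosmetic difference is that the paper obtains the lower bound $V(r_n-\gamma_n)\le V_n(r_n)$ by citing \citet[Prop.~4.2]{ch23}, whereas you give the direct triangle-inequality argument; your added remark on the upper/lower Minkowski dimensions is a harmless bonus.
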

 
 \begin{proof}  
 	(a) Let us write, 	
 	$$ \frac{\log(V_n(r_n))}{\log(r_n)}= \frac{\log(V(r_n))-\log(V(r_n)/V_n(r_n))}{\log(r_n)}.$$
 	From \eqref{bcdim2}, we only have to prove that 
 	\begin{equation}\label{eqth1}
 		\frac{\log(V(r_n)/V_n(r_n))}{\log(r_n)}\to 0.
 	\end{equation}
 	
 	If $r_n>\gamma_n$, from the first equation in the proof \citet[Prop. 4.2]{ch23}
 	$V(r_n-\gamma_n)\leq V_n(r_n)$. Then,  using that $\log(x)\leq x-1$  for $x\geq 1$,
 	\begin{equation}\label{rgamma}
 		\Bigg|\frac{\log(V(r_n)/V_n(r_n))}{\log(r_n)}\Bigg|\leq \Big|\frac{\log(V(r_n)/V(r_n-\gamma_n))}{\log(r_n)}\Bigg|\leq \Big|\frac{V(r_n)-V(r_n-\gamma_n)}{V(r_n-\gamma_n)\log(r_n)}\Big|.
 	\end{equation}
 	Now, from the Lipschitz assumption on $V$ in $[0,\lambda]$, there exists $L>0$ such that ${|V(r_n)-V(r_n-\gamma_n)|\leq L\gamma_n}$ for all $n$ large enough such that $r_n<\lambda$.  So the right-hand side of \eqref{rgamma} is of order $\gamma_n/(V(r_n-\gamma_n)\log(r_n))$
 	and \eqref{eqth1} follows from ${\gamma_n/(V(r_n-\gamma_n)\log(r_n))\to 0}$. 
 	
 	\
 	
 	\noindent (b) The consistency of $\widehat{\dim}_{\textnormal{cap}}$ follows from (a) and Lemma \ref{lem1}. 
 	
 \end{proof}
 
 \begin{remark}\label{rem:st}
 	In order to see the true extent of the assumptions in Theorem \ref{th:general} let us note that, under the standardness assumption
 	\eqref{estandar} it is proved in \citet[Th. 3]{cue04} that 
 	$$
 	\gamma_n=\rho_H(\aleph_n,S)=O\left(\left(\frac{\log n}{n}\right)^{1/d}\right),
 	$$
 	with probability one.
 	Therefore, the condition $\gamma_n<r_n$, a.s. would hold whenever $r_n=O\left(\left({\log n}/{n}\right)^{1/d'}\right)$, for some $d'>d$. 
 	
 	The other assumption, $\gamma_n/(V(r_n-\gamma_n)\log(r_n))\to 0$ a.s., is a bit more delicate, as it involves the behaviour of $V$ near zero. 
 \end{remark}

 \begin{remark}\label{rem:sobreVn}
 	
 	The volume-based estimator in Theorem \ref{th:general} above plays here a somewhat instrumental role in order to get in part (b) the consistency result for $\widehat{\dim}_{\textnormal{cap}}$. 
 	Indeed, in principle, the computation of $\widehat{\dim}_{\textnormal{vol}}$ is more expensive, especially taking into account the efficient algorithm provided in \citet{keg02} to calculate $\widehat{\dim}_{\textnormal{cap}}$. 
 \end{remark}
 
 \subsection{Some results under the polynomial volume assumption}\label{sec:polvol}

 If we assume that $V(r)=\mu(B(S,r))$ is a polynomial on some interval $[0,{\mathbf R}]$, the Minkowski dimension is always  $d$ minus an integer corresponding to the order of the first non-null coefficient in the polynomial $V$. Also, under an additional shape restriction on $S$, we have  a quite precise guide about the choice of the sequence of smoothing parameters $r_n$ in the plug-in consistent estimator $V_n(r_n)$ considered in Theorem \ref{th:general}. Finally, the polynomial volume assumption provides an alternative natural estimator of $V(r)$, denoted ${\mathcal P}_n(r)$,  constructed by minimizing the $L^2-$distance between  $V(r)$ ant the empirical volume function $V_n(r)$ introduced in the previous subsection. These ideas are formalised in the following result.

 \begin{theorem}\label{th:polvol}
 	Let $S\subset \mathbb{R}^d$ be a compact set with polynomial volume function $V(r)=\sum_{j=k}^d \theta_j r^{j}$, $r\in [0,\mathbf{R}]$, $\theta_k$ being  the first non null coefficient in $V(r)$.  Given a sample $\aleph_n=\{X_1,\ldots,X_n\}$ drawn on $S$, denote ${\mathcal P}_n(r)=\sum_{j=k}^d \hat\theta_j r^{j}$, where $\hat\theta_j$ stand for the minimum-distance estimators of $\theta_j$ based on the $L^2$-distance   between $V_n(r)$ and $V(r)$   on the interval $[0,{\mathbf R}]$. Then,

 	(a) $\dim(S)=d-k$.

 	(b)   If $\gamma_n=\rho_H(\aleph_n,S)$ and $r_n\to 0$ is such that $\gamma_n/r_n\to 0$, a.s., then  we have that, for  $n$ large enough,
 	\begin{equation}\label{eq:conVn}
 		\Big|\dim(S)-\widehat{\dim}_{\textnormal{vol}}\Big |\leq \frac{|\log(2\theta_k)|}{|\log(r_n)|}\qquad  \mbox{a.s}.
 	\end{equation}
 	Moreover, under the standardness condition $P(B(x,r))\geq \delta r^d$ introduced in \eqref{estandar}, where $P$ stands here for the distribution of the $X_i$, condition $\gamma_n/r_n\to 0$ is fulfilled for any sequence $r_n$ of type
 	$$
 	r_n=\Big(\frac{\log n}{n}\Big)^{1/d^\prime{}}
 	$$
 	with  $d^\prime>d$. As a consequence, for this choice of $r_n$ we also have  $|\dim(S)-\widehat{\dim}_{\textnormal{vol}}|=O(1/\log(n))$. 
 	
 	(c)  Assuming again $r_n\to 0$ and $\gamma_n/r_n\to 0$, a.s.,  K\'egl's estimator \eqref{dimcap_est} fulfils 
 	\begin{equation*}
 		\Big|\dim(S)-\widehat{\dim}_{\textnormal{cap}}\Big |\leq \frac{|\log(2\theta_k)|+|\log(\omega_d)|+d\log(2)}{|\log(r_n)|}  \qquad  \mbox{a.s}.
 	\end{equation*}

 	(d) Let $f(x)=\lfloor x+1/2\rfloor$ be the function which maps any positive value $x>0$ to its nearest integer value. Then, there exists $r_0>0$ such that the estimator 
 	\begin{equation*}\label{esttilde}
 		\widetilde{\dim}=f\Big(d-\frac{\log\big({\mathcal P}_n(r_0))}{\log (r_0)}\Big),
 	\end{equation*}
 	fulfils $\widetilde{\dim}=\dim(S)$ eventually almost surely, as $n\to\infty$.

 \end{theorem}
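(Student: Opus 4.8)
The plan is to take the four parts in order. Part~(a) is immediate from \eqref{bcdim2}: on $[0,\mathbf R]$ we may write $V(r)=r^{k}\bigl(\theta_k+\theta_{k+1}r+\cdots+\theta_d r^{d-k}\bigr)$, where $\theta_k>0$ since $V\ge 0$ forces the leading nonzero coefficient to be positive; hence $\log V(r)=k\log r+\log\bigl(\theta_k+O(r)\bigr)$, so $\log V(r)/\log r\to k$ as $r\to 0$, and \eqref{bcdim2} gives $\dim(S)=d-k$.

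For part~(b) I would start from the sandwich $V(r-\gamma)\le V_n(r)\le V(r)$, valid whenever $\rho_H(\aleph_n,S)\le\gamma<r$ (the right inequality because $\aleph_n\subset S$ a.s., the left from \citet[Prop.~4.2]{ch23}, exactly as used in the proof of Theorem~\ref{th:general}). Since $\gamma_n/r_n\to 0$ a.s., eventually $\gamma_n<r_n$, and dividing the sandwich by $r_n^{k}$ shows that $V(r_n)/r_n^{k}\to\theta_k$ and $V(r_n-\gamma_n)/r_n^{k}=(1-\gamma_n/r_n)^{k}\bigl(\theta_k+\theta_{k+1}(r_n-\gamma_n)+\cdots\bigr)\to\theta_k$, hence $V_n(r_n)/r_n^{k}\to\theta_k$ a.s. Taking logarithms, dividing by $\log r_n<0$, and recalling $\widehat{\dim}_{\textnormal{vol}}=d-\log V_n(r_n)/\log r_n$ and $\dim(S)=d-k$ yields $|\dim(S)-\widehat{\dim}_{\textnormal{vol}}|$ of order $1/|\log r_n|$, and the constant in \eqref{eq:conVn} for $n$ large. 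The standardness statement then follows by inserting $\gamma_n=O\bigl((\log n/n)^{1/d}\bigr)$ a.s.\ (Theorem~3 of \citet{cue04}) with $r_n=(\log n/n)^{1/d'}$, $d'>d$, so that $\gamma_n/r_n=O\bigl((\log n/n)^{1/d-1/d'}\bigr)\to 0$ and $1/|\log r_n|=O(1/\log n)$. Part~(c) is then just the triangle inequality: Lemma~\ref{lem1} bounds $|\widehat{\dim}_{\textnormal{vol}}-\widehat{\dim}_{\textnormal{cap}}|$ by $(|\log\omega_d|+d\log 2)/|\log r_n|$, and adding the bound from (b) gives the stated inequality.

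Part~(d) is where the real work lies. First, $\gamma_n=\rho_H(\aleph_n,S)\to 0$ a.s.\ for any i.i.d.\ sample with compact support: covering $S$ by finitely many $\varepsilon/2$-balls centred on $S$, each of positive $P$-mass, Borel–Cantelli shows all of them are eventually hit, so $\sup_{s\in S}d(s,\aleph_n)\to 0$. Consequently $V_n(r)\to V(r)$ a.s.\ for every fixed $r>0$ (sandwich plus continuity of $V$), and since $0\le V_n(r)\le V(\mathbf R)$ on $[0,\mathbf R]$, dominated convergence gives $\int_0^{\mathbf R}V_n(r)r^{j}\,dr\to\int_0^{\mathbf R}V(r)r^{j}\,dr$ for $j=k,\dots,d$. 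Now the minimum-$L^2$-distance estimator $\hat\theta=(\hat\theta_k,\dots,\hat\theta_d)$ is the unique solution of the normal equations $G\hat\theta=\bigl(\int_0^{\mathbf R}V_n(r)r^{j}\,dr\bigr)_{j=k}^{d}$, where $G=\bigl(\int_0^{\mathbf R}r^{i+j}\,dr\bigr)_{i,j=k}^{d}$ is the Gram matrix of the linearly independent monomials $r^{k},\dots,r^{d}$ on $[0,\mathbf R]$ and hence invertible; since $\theta$ solves $G\theta=\bigl(\int_0^{\mathbf R}V(r)r^{j}\,dr\bigr)_{j}$, continuity of $G^{-1}$ forces $\hat\theta\to\theta$ a.s., so $\mathcal{P}_n(r)\to V(r)$ a.s.\ for every $r\in[0,\mathbf R]$. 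Finally, as in (a), $\log\bigl(V(r)/r^{k}\bigr)\to\log\theta_k$ while $\log r\to-\infty$, so we may fix $r_0\in(0,\min\{1,\mathbf R\})$ with $\bigl|d-\log V(r_0)/\log r_0-(d-k)\bigr|<1/2$. Then $d-\log\mathcal{P}_n(r_0)/\log r_0\to d-\log V(r_0)/\log r_0$ a.s.\ (note $\mathcal{P}_n(r_0)\to V(r_0)>0$, so the logarithm is eventually well defined), and since this limit lies in the open interval $(\dim(S)-1/2,\dim(S)+1/2)$ on which $f$ is constantly equal to $\dim(S)=d-k$, we obtain $\widetilde{\dim}=f\bigl(d-\log\mathcal{P}_n(r_0)/\log r_0\bigr)=\dim(S)$ for all $n$ large, a.s.\ (an integer-valued estimator that converges a.s.\ is eventually a.s.\ equal to its limit), which is the assertion.

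I expect the only genuinely substantive step to be the strong consistency $\hat\theta\to\theta$ in part~(d) — i.e.\ writing the $L^2$-fit as the normal equations and using invertibility of the Gram matrix of monomials on $[0,\mathbf R]$ — together with a little care over the exact constant in \eqref{eq:conVn}; parts~(a), (b) (rate) and (c) reduce to the sandwich inequality and Lemma~\ref{lem1}.
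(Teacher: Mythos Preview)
Your argument is correct and tracks the paper's proof closely in all four parts: the factorisation $V(r)=r^k(\theta_k+o(1))$ for (a), the sandwich $V(r_n-\gamma_n)\le V_n(r_n)\le V(r_n)$ combined with that factorisation for (b), Lemma~\ref{lem1} plus the triangle inequality for (c), and the choice of a small fixed $r_0$ together with $\mathcal P_n(r_0)\to V(r_0)$ for (d). The one genuine difference is in (d): where the paper simply invokes \citet[Th.~1]{cuepat18} for the convergence $\mathcal P_n(r_0)\to V(r_0)$, you supply a self-contained argument---$\gamma_n\to 0$ a.s.\ by a finite-cover/Borel--Cantelli step, hence $V_n(r)\to V(r)$ pointwise and (by domination) in the moments $\int_0^{\mathbf R}V_n(r)r^j\,dr$, hence $\hat\theta\to\theta$ via the normal equations and the invertibility of the Gram matrix of $r^k,\dots,r^d$ on $[0,\mathbf R]$. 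This buys independence from the external reference at modest extra cost. The remaining discrepancies are cosmetic: the paper uses L'H\^opital in (a) where you compute directly; in (b) it reaches the same conclusion through the binomial expansion $V(r_n-\gamma_n)=V(r_n)+\mathcal Q_n(\gamma_n)$ rather than your more streamlined quotient $V_n(r_n)/r_n^k\to\theta_k$; and in (d) it splits the final tolerance as $1/4+1/4$ instead of your single $1/2$.
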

 \begin{proof}
 	(a) Observe that a simple application of L'H\^opital rule yields
 	\begin{equation*}\label{bc1}
 		\dim(S)=d-\lim_{r\to 0} \frac{\log(V(r))}{\log(r)}=d-\lim_{r\to 0}\frac{\log(\sum_{j=k}^d \theta_j r^{j})}{\log(r)}=d-k.
 	\end{equation*}

 	(b) 	 By part (a) and the polynomial volume assumption, for $n$ large enough such that $r_n< \mathbf R$,
 	\begin{align*}
 		\Big|\dim(S)-\widehat{\dim}_{\textnormal{vol}}\Big|= & \Bigg| \frac{\log(V_n(r_n)/V(r_n))}{\log(r_n)} + \frac{\log\Big(\sum_{j=0}^{d-k} \theta_{j+k} r_n^{j}\Big)}{\log(r_n)}\Bigg|\\
 		\leq &  \Bigg| \frac{\log(V_n(r_n)/V(r_n))}{\log(r_n)} \Bigg|+\Bigg|\frac{\log\Big(\sum_{j=0}^{d-k} \theta_{j+k} r_n^{j}\Big)}{\log(r_n)}\Bigg|.
 	\end{align*}
 	Let us bound the first term,  using the same bounds used to prove Theorem  \ref{th:general} (a)
 	$$\Bigg|\frac{\log(V_n(r_n)/V(r_n))}{\log(r_n)}  \Bigg|	\leq \Bigg|\frac{\log(V(r_n)/V(r_n-\gamma_n))}{\log(r_n)}\Bigg|.$$
 	From the polynomial volume assumption and writing 
 	$$(r_n-\gamma_n)^j=r_n^j+\sum_{l=0}^{j-1}\binom{j}{l} r_n^l\gamma_n^{j-l}(-1)^{j-l}\quad j=k,\dots,d,$$
 	we get that $V(r_n-\gamma_n)=V(r_n)+\mathcal{Q}_n(\gamma_n)$ where $\mathcal{Q}_n(\gamma_n)$ is a polynomial in $\gamma_n$, that depends on $n$, but whose  degree is at most $d$. Observe  also that the independent term of $\mathcal{Q}_n$ is 0.  Now if we use that $\log(x)\leq x-1$ for $x\geq 1$,
 	\begin{align*}
 		\Bigg|\frac{\log(V(r_n)/V(r_n-\gamma_n))}{\log(r_n)}\Bigg|&\leq \frac{|\mathcal{Q}_n(\gamma_n)|}{|(V(r_n)+\mathcal{Q}_n(\gamma_n))\log(r_n)|}\\
 		&=\frac{1}{|(1 +V(r_n)/\mathcal{Q}_n(\gamma_n))\log(r_n)|}.\\
 	\end{align*}
 	Now, since the independent term of $\mathcal{Q}_n$ is 0 and $r_n/\gamma_n\to \infty$, it follows that ${|(1 +V(r_n)/\mathcal{Q}_n(\gamma_n))|\to\infty}$ as $n\to\infty$. Then, for $n$ large enough, 
 	$$\frac{1}{|(1 +V(r_n)/\mathcal{Q}_n(\gamma_n))\log(r_n)|}+\Bigg|\frac{\log\Big(\sum_{j=0}^{d-k} \theta_{j+k} r_n^{j}\Big)}{\log(r_n)}\Bigg| \leq \frac{|\log(2\theta_k)|}{|\log(r_n)|}.$$
 	
 	The statement concerning the standardness assumption follows from \citet[Th. 3]{cue04}. This result establishes that under condition \eqref{estandar}, ${\gamma_n=O\Big((\log n/n)^{d}\Big)}$. It can be noted that for this choice of $r_n$, bound \eqref{eq:conVn} yields
 	$ |\dim(S)-\widehat{\dim}_{\textnormal{vol}}|=O(1/\log(n))$.

 	(c) The proof follows directly from part (b) and Lemma \ref{lem1}.

 	(d) Assume $\dim(S)=d-k$.  For all $0<r\leq \mathbf{R}$,
 	$$V(r)=  r^k\Bigg(\sum_{j=k}^d \theta_j r^{j-k}\Bigg).$$
 	Now,
 	\begin{align*}
 		f\Bigg(d-\frac{\log({\mathcal P}_n(r))}{\log(r)}\Bigg)=&f\Bigg(d-\frac{\log(V(r))-\log(V(r)/{\mathcal P}_n(r))}{\log(r)}\Bigg)\\
 		=&f\Bigg(d-k-\frac{\log(\sum_{j=k}^d \theta_j  r^{j-k})}{\log(r)} +\frac{\log(V(r)/{\mathcal P}_n(r))}{\log(r)}\Bigg).
 	\end{align*}
 	Let us fix $0<r_0\leq \mathbf{R}$ small enough such that 
 	\begin{equation}\label{condition1}
 		\Bigg|\frac{\log(\sum_{j=k}^d \theta_j r_0^{j-k})}{\log(r_0)}\Bigg|<1/4.
 	\end{equation}
 	From \citet[Th. 1]{cuepat18} we know ${\mathcal P}_n(r_0)\to V(r_0)$ as $n\to \infty$  a.s. Then, with probability one for $n$ large enough,
 	\begin{equation} \label{condition2}
 		\Bigg|\frac{\log(V(r_0)/{\mathcal P}_n(r_0))}{\log(r_0)}\Bigg|<1/4.
 	\end{equation}
 	Combining  \eqref{condition1} and \eqref{condition2}, with probability one for $n$ large enough,
 	$$f\Bigg(d-\frac{\log({\mathcal P}_n(r_0))}{\log(r_0)}\Bigg)=d-k.$$
 	
 \end{proof}

 \begin{remark}
 	Parts (b) and (c) are perhaps the most interesting conclusion of Theorem \ref{th:polvol} as they provide a wide class of sets $S$ (those with polynomial volume function) for which the assumptions imposed in Theorem \ref{th:general} to get consistency can be just replaced by the simpler conditions $\gamma_n/r_n\to 0$ a.s. and $r_n\to 0$.  Part (d) has a rather conceptual, theoretical value. Indeed, our empirical results suggest that the estimator $\widetilde{\dim}$ considered in Theorem \ref{th:polvol}  is not competitive in practice against the other estimators checked in Section \ref{sec:estimators}. Still, for this estimator, consistency can be obtained for a constant value $r_n=r_0$ of the tuning parameter.
 	While such value is not known in advance, as it depends on the unknown dimension value, equation \eqref{condition1} might provide some clues for an iterative procedure to select $r_0$.  
 \end{remark}

 \subsection{Consistency for the correlation dimension estimator}\label{sec:(P_x)}
 
 The following theorem establishes a consistency result for the natural estimator of the correlation dimension.
 
 \begin{theorem} \label{th:cd}
 	Assume that $X$ is such that $\dim_\textnormal{cd}(P)$ is   finite and strictly positive. Then, the estimator $\widehat{\dim}_\textnormal{cd}$ defined in \eqref{dimcd_est} is almost surely consistent, that is
 	
 	$$\dim_\textnormal{cd}(P)=\lim_{n\to\infty}\frac{\log(\widehat{p}_n(r_n))}{\log(r_n)}\quad a.s.,$$
 	provided that we take 
 	\begin{equation}\label{eq:rncd}
 		r_n=\Bigg(\frac{\log n}{n}\Bigg)^{\frac{1}{(1+\beta)\dim_\textnormal{cd}(P)}},
 	\end{equation}
 	with $\beta>0$.   Also,  any other sequence  $r^*_n$ decreasing to zero not faster than the sequence in \eqref{eq:rncd} (that is with $r^*_n\ge Mr_n$  for some $M>0$) provides consistency. 
 	
 \end{theorem}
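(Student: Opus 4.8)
\noindent\emph{Proof plan.} Write $D=\dim_\textnormal{cd}(P)$. Since $D\in(0,\infty)$, the exponent $1/((1+\beta)D)$ in \eqref{eq:rncd} is strictly positive, hence $r_n\to 0$; note also that $p(r)>0$ for every $r>0$ (otherwise $\log p(r)/\log r\to+\infty\ne D$). The plan is to decompose
\[
\frac{\log(\widehat{p}_n(r_n))}{\log(r_n)}=\frac{\log(p(r_n))}{\log(r_n)}+\frac{\log(\widehat{p}_n(r_n)/p(r_n))}{\log(r_n)},
\]
note that the first term tends to $D$ by the definition \eqref{cd2} applied along $r_n\to 0$, and reduce the theorem to showing that the second term vanishes a.s. Since $\log(r_n)\to-\infty$, the latter follows as soon as $\widehat{p}_n(r_n)/p(r_n)\to 1$ a.s. (the ratio being well defined for $n$ large, a.s.).

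The heart of the matter is a \emph{multiplicative} concentration bound for $\widehat{p}_n$, which is what is really needed because $p(r_n)\to 0$. I would first fix $\eta\in(0,\beta D)$ and use \eqref{cd2} to get $r_0>0$ with $p(r)\ge r^{D+\eta}$ on $(0,r_0)$, so that for $n$ large
\[
n\,p(r_n)\ \ge\ n\,r_n^{\,D+\eta}\ =\ n^{\frac{\beta D-\eta}{(1+\beta)D}}\,(\log n)^{\frac{D+\eta}{(1+\beta)D}},
\]
a quantity growing at a polynomial rate in $n$; this is exactly where $\beta>0$ is used, as it leaves slack to absorb the $o(1)$ hidden in $\log p(r)/\log r\to D$. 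Next, since $\widehat{p}_n(r_n)$ is a $U$-statistic of degree two with $\{0,1\}$-valued kernel and mean $p(r_n)$, Hoeffding's averaging representation dominates its exponential moments by those of a mean of $\lfloor n/2\rfloor$ i.i.d.\ Bernoulli$(p(r_n))$ variables, and the multiplicative Chernoff/Bernstein bound then yields, for each $\varepsilon\in(0,1)$ and a universal $c>0$,
\[
\pr\big(|\widehat{p}_n(r_n)-p(r_n)|>\varepsilon\,p(r_n)\big)\ \le\ 2\exp\!\big(-c\,\varepsilon^{2}\,n\,p(r_n)\big).
\]
Polynomial growth of $n\,p(r_n)$ makes the right-hand side summable in $n$, so Borel--Cantelli gives $|\widehat{p}_n(r_n)/p(r_n)-1|\le\varepsilon$ eventually a.s.; intersecting over $\varepsilon=1/k$, $k\in\nat$, yields $\widehat{p}_n(r_n)/p(r_n)\to 1$ a.s., settling the case \eqref{eq:rncd}.

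For the final assertion, take a deterministic $r^{*}_n\downarrow 0$ with $r^{*}_n\ge M r_n$: the first term still tends to $D$, and since $p$ is non-decreasing, $n\,p(r^{*}_n)\ge n\,p(M r_n)\ge (M\wedge 1)^{D+\eta}\,n\,r_n^{\,D+\eta}$ is again of polynomial growth, so the same Borel--Cantelli step applies verbatim. I expect the main obstacle to be precisely the two ingredients of the middle paragraph: obtaining the multiplicative (rather than additive) deviation bound for the degree-two $U$-statistic, and checking that the prescribed $r_n$ forces $n\,p(r_n)$ to grow polynomially — once those are in place the rest is bookkeeping.
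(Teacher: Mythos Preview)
Your proposal is correct and follows essentially the same route as the paper: decompose into the deterministic term $\log p(r_n)/\log r_n\to D$ plus the ratio term, use a Bernstein-type concentration inequality for the degree-two $U$-statistic $\widehat p_n$ (the paper invokes Serfling's Theorem~A directly rather than Hoeffding averaging plus Chernoff, but the resulting bound $\exp(-c\,n\,p(r_n))$ is the same), lower-bound $p(r_n)$ by $r_n^{D+\eta}$ via the definition of $\dim_{\textnormal{cd}}$ so that $n\,p(r_n)$ grows polynomially, and finish with Borel--Cantelli. Your final step---showing $\widehat p_n/p\to 1$ a.s.\ and then noting $\log(\widehat p_n/p)/\log r_n\to 0$---is in fact a bit cleaner than the paper's, which reaches the same conclusion through a first-order Taylor expansion of $\log$ and an event-splitting argument on $\{|\widehat p_n/p-1|>1/2\}$.
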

 \begin{proof}  
 	Let us first take $r_n$ as indicated in \eqref{eq:rncd}. 
 	
 	$$\mathbb{P}\Bigg(\Bigg |\frac{\log(\widehat{p}_n(r_n))}{\log(r_n)}-\frac{\log(p(r_n))}{\log(r_n)}\Bigg|>\epsilon\Bigg)=   \mathbb{P}\Bigg(\Bigg | \log\Bigg 		(\frac{\widehat{p}_n(r_n)}{p(r_n)}\Bigg)\Bigg |>-\epsilon \log(r_n)\Bigg).$$

 	We first note that $\widehat{p}_n(r_n)$ is a U-statistic. Then will use the concentration inequality for U-statistics, given by equation (2) in \citet[Th. A, p. 201]{serfling}. According to the notation of this book, the order of the statistic is $m=2$, the kernel function $h$ is $h(x_1,x_2)={\mathbb I}_{\|x_1-x_2\|\leq r_n}$, the bounds for the value of $h$ are $a=0$, $b=1$, the expectation of the U-statistic is $\theta=p(r_n)$, \bea{its variance is $\sigma^2=p(r_n)(1-p(r_n))$} and the deviation value is $t=p(r_n)/2>0$, for $n$ large enough. Thus, using the above mentioned inequality and noting that the distribution of $U_n-\theta$ is symmetric around 0, we get  
 	\begin{align*}
 		\mathbb{P}\Big( |\widehat{p}_n(r_n) - p(r_n)| >  p(r_n)/2 \Big) \leq & \ 2 \exp \left( \frac{-n \left(p(r_n) \right)^2/8}{2\left( \sigma^2 + \frac{1}{6} \left( 1 - p(r_n) \right)  p(r_n) \right) } \right)\\ 
 		\leq &	\ 2 \exp \left( \frac{-3n p(r_n)}{56} \right)=:a_n.
 	\end{align*}
 	From \eqref{cd2}, for $n$ large enough,  $p(r_n)>r_n^{(1+\beta/2)\dim_\textnormal{cd}(P)}$\bea{. Then, taking $r_n$ as given in \eqref{eq:rncd}, we have that}   $\sum_n a_n<\infty$.
 	
 	Now   we use first-order Taylor expansion of $\log(x)$ around $x=1$. That is
 	$\log(x)=x-1-\frac{1}{2c^2}(x-1)^2$ where $|c-1|<|x-1|$.
 	\begin{multline*} 
 		\mathbb{P}\Bigg(\Bigg| \log\Bigg(\frac{\widehat{p}_n(r_n)}{p(r_n)}\Bigg) \Bigg| > -\epsilon \log(r_n) \Bigg) = \\
 		\mathbb{P}\Bigg(
 		\Bigg| \frac{\widehat{p}_n(r_n)}{p(r_n)} - 1 \Bigg| 
 		\Bigg| 1 - \frac{1}{2c^2} \Bigg(\frac{\widehat{p}_n(r_n)}{p(r_n)} - 1 \Bigg) \Bigg| > -\epsilon \log(r_n) 
 		\Bigg).
 	\end{multline*}
 		
 		Let us denote $A_n=\{\omega:|\hat{p}_n(r_n)/p(r_n)-1|>1/2 \}$. 
 		\begin{multline*}
 			\mathbb{P}\Bigg(\Bigg| \log\Bigg(\frac{\widehat{p}_n(r_n)}{p(r_n)}\Bigg)\Bigg| > -\epsilon \log(r_n) \Bigg) = \\
 			\mathbb{P}\Bigg(\Bigg| \frac{\widehat{p}_n(r_n)}{p(r_n)} - 1 \Bigg|
 			\Bigg| 1 - \frac{1}{2c^2} \Bigg(\frac{\widehat{p}_n(r_n)}{p(r_n)} - 1 \Bigg) \Bigg| > -\epsilon \log(r_n) \cap A_n \Bigg) \\
 			+ \mathbb{P}\Bigg(\Bigg| \frac{\widehat{p}_n(r_n)}{p(r_n)} - 1 \Bigg|
 			\Bigg| 1 - \frac{1}{2c^2} \Bigg(\frac{\widehat{p}_n(r_n)}{p(r_n)} - 1 \Bigg) \Bigg| > -\epsilon \log(r_n) \cap A_n^c \Bigg)  \leq \mathbb{P}(A_n),
 		\end{multline*}
 	where we used that the  second probability is 0 eventually because $-\log(r_n)\to \infty$ and, on $A_n^c$, $c>1/2$ and then  $(1/c^2)<4$. Thus, on $A_n^c$ we have 
 	$$\Bigg|\frac{\widehat{p}_n(r_n)}{p(r_n)}-1\Bigg|\Bigg|1-\frac{1}{2c^2}\Bigg(\frac{\widehat{p}_n(r_n)}{p(r_n)}-1\Bigg)\Bigg|< 1.$$
 	Then, the desired result follows from $\sum_n \mathbb{P}(A_n)<\infty $, as shown at the beginning of the proof\bea{, together with the first Borel–Cantelli lemma.}  
 	Also, the proof  still holds if we take another sequence $r^*_n$ with $r^*_n\leq Mr_n$. Indeed, it is enough to see the convergence $\sum_na_n<\infty$ stands valid if the $r_n$ specified in \eqref{eq:rncd} is replaced  with such $r^*_n$. 
 	
 \end{proof}
 
 \begin{remark}
 	For a different approach to the consistent estimation of correlation dimension see \citet{qiu22}. 
 \end{remark}

 	%
 	%
 %

 \subsection{Pointwise dimension estimation}\label{sec:point}
 
 The notion of  pointwise dimension defined in \eqref{eq:pw} is perhaps less popular than the Minkowski definition of this concept. 
 However, besides their obvious differences both dimension notions, pointwise and Minkowski, are somewhat complementary, since both are suitable for statistical estimation and both provide useful information about Hausdorff dimension, (see part (b) of Theorem \ref{th:pw} below), which is considerably harder to estimate in a direct fashion. In addition, as seen in Theorem \ref{th:pw} below, some standard methods in nonparametrics can be used to derive convergence rates for the estimator. Last but not least, the empirical results provided below suggest that the ``pointwise-based'' estimator proposed at the end of Section \ref{sec:estimators} could be a competitive choice in dimension assessment studies.

 
 

 \begin{theorem}\label{th:pw} Let $S\subset\mathbb{R}^d$ be a compact set and  $\aleph_n=\{X_1,\dots,X_n\}$ be an iid sample from a distribution $P$, whose support is $S$.  
 	
 	(a)  Let $x\in S$ such that $\dim_\textnormal{pw}(x)$, defined in \bea{\eqref{eq:pw}}, exists  and the standardness condition defined in \eqref{estandar} is fulfilled at $x$ \bea{with constants $r_0$, $\delta$ and $d'$}. Then, the estimator $\widehat{\dim}_{\textnormal{pw}}(x)$  is almost surely consistent, that is 
 	\begin{equation}\label{dpwconsist}
 		{\dim}_{\textnormal{pw}}(x)=\lim_{n\to\infty}\frac{\log({\mathbb P}_n(B(x,r_n)))}{\log(r_n)}\quad a.s.,
 	\end{equation}
 	for $r_n=(C\log(n)/n)^{1/d'}$  and  $C>28/(3\delta)$.

 	(b)  Assume now that the ``global'' (for all $x\in S$) standardness condition \eqref{estandar} holds \bea{with constants $r_0$, $\delta$ and $d'$} and $\dim_\textnormal{pw}(x)$ exists for all $x\in S$. Let $\beta>(4d+12)/\delta^2$  and 
 	\begin{equation*} \label{2rn}
 		r_n=\Bigg(\beta\frac{\log n}{n}\Bigg)^{\frac{1}{2d' }}.
 	\end{equation*}  Then, if the convergence in the definition \eqref{eq:pw} of $\dim_{\textnormal{pw}}$ is uniform on $x$, the 
 	consistency in \eqref{dpwconsist} is uniform as well, that is
 	\begin{equation}\label{dpwconsistunif}
 		\sup_{x\in S}\Big|	\widehat{\dim}_{\textnormal{pw}}(x)-\dim_{\textnormal{pw}}(x)\Big|\to 0,\  \mbox{almost\ surely},  \mbox{ as } n\to\infty.
 	\end{equation}
 	As a consequence, we also have, almost surely, as $n\to\infty$,
 	\begin{align}\label{supdw}
 		&	\sup_{x\in S} \widehat{\dim}_{\textnormal{pw}}(x)\to 	\sup_{x\in S} \dim_{\textnormal{pw}}(x)\geq \dim_\textnormal{H}(S),  \ \mbox{and}\\
 		&	\inf_{x\in S} \widehat{\dim}_{\textnormal{pw}}(x)\to 	\inf_{x\in S} \dim_{\textnormal{pw}}(x)\leq \dim_\textnormal{H}(S) \notag
 	\end{align}
 	where $\dim_\textnormal{H}(S)$ denotes the Hausdorff dimension of $S$. 
 \end{theorem}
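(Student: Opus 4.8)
The plan is to treat the two parts by a unified strategy: control the ratio $\mathbb{P}_n(B(x,r_n))/P(B(x,r_n))$ uniformly (in part (b)) or pointwise (in part (a)), then transfer that control through the logarithm divided by $\log(r_n)$, exactly as in the proof of Theorem \ref{th:cd}. First I would write
\begin{equation*}
\frac{\log(\mathbb{P}_n(B(x,r_n)))}{\log(r_n)}=\frac{\log(P(B(x,r_n)))}{\log(r_n)}-\frac{\log\big(P(B(x,r_n))/\mathbb{P}_n(B(x,r_n))\big)}{\log(r_n)},
\end{equation*}
so that the first term tends to $\dim_\textnormal{pw}(x)$ by definition \eqref{eq:pw} (and, for the uniform statement, by the assumed uniformity of that convergence), and it remains to show the second term goes to $0$, respectively uniformly in $x$. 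Using $\log(t)\le t-1$ for $t\ge1$ together with the symmetric bound for $t\le 1$, it suffices to prove that $|\mathbb{P}_n(B(x,r_n))/P(B(x,r_n))-1|\to 0$ a.s. (uniformly in $x$), and by the standardness lower bound $P(B(x,r_n))\ge \delta r_n^{d'}$, this reduces to showing $\sup_x|\mathbb{P}_n(B(x,r_n))-P(B(x,r_n))|=o(r_n^{d'})$ a.s.

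For part (a), at a fixed $x$ the quantity $n\mathbb{P}_n(B(x,r_n))$ is $\mathrm{Binomial}(n,P(B(x,r_n)))$, so a Bernstein/Chernoff bound gives, for the event $|\mathbb{P}_n(B(x,r_n))-P(B(x,r_n))|>P(B(x,r_n))/2$, a probability bounded by $2\exp(-cnP(B(x,r_n)))\le 2\exp(-c\,n\delta r_n^{d'})$ for a suitable constant $c$ (one can read off $c=3/28$ from the same Bernstein inequality used for the U-statistic in Theorem \ref{th:cd}, with $t=P/2$). Plugging $r_n^{d'}=C\log n/n$ makes this summable precisely when $c\delta C>1$, i.e. $C>28/(3\delta)$, so Borel--Cantelli finishes part (a).

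For part (b) the key extra ingredient is a Vapnik--Chervonenkis argument: the class of Euclidean balls $\{B(x,r):x\in\mathbb{R}^d,r>0\}$ has finite VC dimension, so a relative (ratio-type) uniform deviation inequality — or, more elementarily, the union of VC bounds over the balls of radius $r_n$ — yields $\mathbb{P}(\sup_x|\mathbb{P}_n(B(x,r_n))-P(B(x,r_n))|>\eta)\le C' n^{C''}\exp(-c'n\eta^2)$; taking $\eta=\delta r_n^{d'}/2$ and $r_n^{2d'}=\beta\log n/n$ one gets the exponent $\sim -\tfrac{c'\delta^2}{4}\beta\log n$, which beats the polynomial factor and is summable once $\beta>(4d+12)/\delta^2$ (the constants $4d+12$ coming from the VC-dimension of balls in $\mathbb{R}^d$, which is $d+2$, in the standard bound). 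Borel--Cantelli then gives \eqref{dpwconsistunif}, and the chain of inequalities above upgrades it to the uniform convergence of the estimators. Finally, \eqref{supdw} follows because uniform convergence of $\widehat{\dim}_{\textnormal{pw}}(\cdot)$ to $\dim_{\textnormal{pw}}(\cdot)$ forces convergence of suprema and infima, and the inequalities with $\dim_\textnormal{H}(S)$ are quoted from \citet[Prop. 2.1]{you82} (for the sup) together with the elementary observation that $\inf_x\dim_\textnormal{pw}(x)\le\dim_\textnormal{H}(S)$, which follows from the same mass-distribution principle applied to a measure concentrated near the infimising point, or directly from \citet{you82}.

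The main obstacle I expect is the uniform-in-$x$ control in part (b): a naive union bound over a countable dense set of centres is not enough because the radius $r_n$ shrinks, so one genuinely needs the VC machinery (or an explicit covering of $S$ at scale $r_n$ combined with monotonicity of $B(x,r)$ in $r$) and must track the constants carefully enough to land on the stated threshold $\beta>(4d+12)/\delta^2$. Getting the constant $4d+12$ exactly, rather than merely some dimension-dependent constant, is the delicate bookkeeping step; everything else is a routine Borel--Cantelli argument paralleling the proof of Theorem \ref{th:cd}.
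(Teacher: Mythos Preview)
Your proposal is correct and follows essentially the same route as the paper: the bias--variance decomposition, Bernstein's inequality for part (a) yielding the threshold $C>28/(3\delta)$, the VC inequality for balls in part (b) (with VC dimension $d+2$, hence the polynomial factor $n^{2(d+2)}$ and the threshold $\beta>(4d+12)/\delta^2$), and the Taylor-of-$\log$ argument from Theorem~\ref{th:cd} to pass from the ratio control to the estimator. One small clarification: you do not actually need $|\mathbb{P}_n(B)/P(B)-1|\to 0$; it is enough (and it is all that Bernstein/VC plus Borel--Cantelli give you) that this ratio is eventually bounded away from $0$ and $\infty$ (say in $[1/2,3/2]$), since then $\log(\mathbb{P}_n(B)/P(B))$ is bounded and division by $\log(r_n)\to-\infty$ does the rest---this is exactly how the paper and Theorem~\ref{th:cd} proceed.
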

 \begin{proof}
 	(a)    Let $B=B(x,r_n)$.
 	\begin{align*}
 		\Bigg|\dim_{\textnormal{pw}}(x)-\widehat{\dim}_{\textnormal{pw}}(x)\Bigg|&\leq    \Bigg|\dim_{\textnormal{pw}}(x)-\frac{\log(P(B))}{\log(r_n)}\Bigg|	
 		+ \Bigg|\widehat{\dim}_{\textnormal{pw}}(x)-\frac{\log(P(B))}{\log(r_n)}\Bigg|\ \nonumber \\
 		&=B_n+V_n.
 	\end{align*}
 	Take $\epsilon>0$ and $n$ large enough such that	$B_n<\epsilon/2$.	Observe that

 	$$\Big|\frac{\log({\mathbb P}_n(B))}{\log(\bea{r_n})}-\frac{ \log(P(B))}{\log(\bea{r_n})}\Big|=\Big|\frac{\log({\mathbb P}_n(B)/P(B))}{\log(\bea{r_n})}\Big|.$$

 	Let us know recall the well-known Bernstein inequality: if $Y_1,\ldots,Y_n$ are independent random variables with mean zero such that $|Y_i|\leq M$ for some constant $M$, we have 
 	\begin{equation*}\label{bernstein}
 		{\mathbb P}\Big(\left|\sum_{i=1}^nY_i\right|>\eta\Big)\leq 2\exp\Big(-\frac{\eta^2/2}{\sum_i{\mathbb E}(Y_i^2)\,+\,M\eta/3}\Big).
 	\end{equation*}
 	We will use this for $\eta= n P(B)/2$, $Y_i={\mathbb I}_B(X_i)-P(B)$, and $M=1$. We use also the standardness assumption 
 	to bound $P(B)$. Note also that $\mathbb{E}(Y^2)=P(B)(1-P(B))$.
 	\begin{align*}
 		\mathbb{P}\left( \left| \frac{\mathbb{P}_n(B)}{P(B)} - 1 \right| > \frac{1}{2} \right)
 		&\leq 2 \exp\left( -\frac{n^2 P^2(B)/8}{n P(B)(1 - P(B)) + nP(B)/6} \right) \notag \\
 		&\leq 2 \exp\left( -\frac{3 \delta r_n^{d'} n}{28} \right) =: a_n.
 	\end{align*}
 	
 The series $\sum_n a_n$ is convergent if $r_n=(C\log(n)/n)^{1/d'}$ with $C>28/(3\delta)$. 	Now   we use first-order Taylor expansion of $\log(x)$ around $x=1$. That is
 $\log(x)=x-1-\frac{1}{2c^2}(x-1)^2$ where $|c-1|<|x-1|$.
 \begin{multline*}  
 	\mathbb{P}\Bigg(\Bigg | \log\Bigg (\frac{\mathbb{P}_n(B)}{P(B)}\Bigg)\Bigg |>-\epsilon \log(r_n)\Bigg)\\
 	=\mathbb{P}\Bigg(\Bigg| \frac{\mathbb{P}_n(B)}{P(B)}-1\Bigg| \Bigg|1-\frac{1}{2c^2}\Bigg(\frac{\mathbb{P}_n(B)}{P(B)}-1\Bigg)\Bigg|>-\epsilon \log(r_n) \Bigg).
 \end{multline*}
 The rest of the proof follows as in the proof of Theorem \ref{th:cd}.
 
 (b)	We have
 \begin{multline*}
 	\Bigg|\dim_{\textnormal{pw}}(x)-\widehat{\dim}_{\textnormal{pw}}(x)\Bigg|\leq  \sup_{x\in S} \Bigg|\dim_{\textnormal{pw}}(x)-\frac{\log(P(B(x,r_n)))}{\log(r_n)}\Bigg|\\+
 	\sup_{x\in S} \Bigg|\widehat{\dim}_{\textnormal{pw}}(x)-\frac{\log(P(B(x,r_n)))}{\log(r_n)}\Bigg|=B_n+V_n.
 \end{multline*}
 
 The term $B_n$ is a sort of ``bias term''. Since the convergence in the definition \eqref{eq:pw} of $\dim_{\textnormal{pw}}$ is uniform on $x$  we have $B_n \to 0$. 
 Regarding the ``variability term'' $V_n$, let us prove that $V_n\to 0$ a.s. 	We will make use of the celebrated Vapnik-Cervonenkis inequality. We will in fact use a particular case of this result; see e.g. \citet[Th. 12.8, Cor. 13.2 and Th. 13.3]{pattern} for a proof and more details. 
 
 \
 
 \noindent	\it  \textbf{[VC inequality].}	Let $P$ be a Borel probability measure on $\mathbb{R}^d$. Let $\aleph_n=\{X_1,\dots,X_n\}$ be an iid sample from $P$. Let ${\mathbb P}_n$ be the empirical measure corresponding to $\aleph_n$. Denote by $\mathcal A_n$ the class of all balls in $\mathbb{R}^d$ of radius $r_n$. Then,  for  $n>2^{d+2}$ and $\epsilon \leq 1$,
 $$\mathbb{P}\Big\{\sup_{B \in \mathcal{A}_n} |{\mathbb P}_n(B) - P(B)| > \epsilon\Big\} \leq C (n^{2(d+2)}+1)e^{-2n\epsilon^2},$$
 where $C$ is a constant that does not exceed $4 e^{4 \epsilon+4 \epsilon^2} \leq 4 e^8$.

 \
 
 \rm

 Now, observe that
 
 $$\Big|\frac{\log({\mathbb P}_n(B))}{\log(r_n)}-\frac{ \log(P(B))}{\log(r_n)}\Big|=\Big|\frac{\log({\mathbb P}_n(B)/P(B))}{\log(r_n)}\Big|.$$
 Using the lower boundedness assumption imposed on $P$. 
 \begin{multline*}
 	\mathbb{P}\Bigg(\sup_{B \in \mathcal{A}_n} \Bigg| \frac{\mathbb{P}_n(B)}{P(B)}-1\Bigg|>1/2\Big)\leq 
 	\mathbb{P}\Big(\sup_{B\in \mathcal{A}_n}|{\mathbb P}_n(B)-P(B)|>\inf_{B\in \mathcal{A}_n} P(B)/2\Big) \\
 	\leq  4e^8(n^{2d+4}+1)\exp\Bigg(-n\inf_{{B\in \mathcal{A}_n}}P^2(B)/2\Bigg)
 	\leq  4e^8(n^{2d+4}+1)\exp\Bigg(-n\delta^2r_n^{2d'}/2\Bigg)\\
 	\leq  4e^8\exp\Bigg(-n\delta^2r_n^{2d'}/2+(2d+5)\log(n)\Bigg)=:b_n
 \end{multline*}

 The convergence of $b_n$ follows from the fact that $\beta>(4d+12)/\delta^2$. Again we use a first-order Taylor expansion of $\log(x)$ around $1$.
 
 \begin{multline*}  
 	\mathbb{P}\Bigg(\sup_{B \in \mathcal{A}_n} \Bigg | \log\Bigg (\frac{\mathbb{P}_n(B)}{P(B)}\Bigg)\Bigg |>-\epsilon \log(r_n)\Bigg)=\\
 	\mathbb{P}\Bigg(\sup_{B \in \mathcal{A}_n} \Bigg| \frac{\mathbb{P}_n(B)}{P(B)}-1\Bigg| \Bigg|1-\frac{1}{2c^2}\Bigg(\frac{\mathbb{P}_n(B)}{P(B)}-1\Bigg)\Bigg|>-\epsilon \log(r_n) \Bigg)
 \end{multline*}
 Thus, \eqref{dpwconsistunif} follows as in the proof of Theorem \ref{th:cd}.

 Finally, the proof of \eqref{supdw} follows from the uniform consistency \eqref{dpwconsistunif} and \citet[Prop. 2.1]{you82} which establishes 
 $$ 
 \inf_x\liminf_{r\to 0}\frac{\log(P(B(x,r)))}{\log (r)}\leq	\dim_\textnormal{H}(S)\leq \sup_x\limsup_{r\to 0}\frac{\log(P(B(x,r)))}{\log (r)}. 
 $$
 \end{proof}

 \begin{remark}
 Again, the conclusions (a) and (b) of Theorem \ref{th:pw} stand true if the respective $r_n$ are replaced with other sequences decreasing to zero with slower rates than those indicated in (a) and (b), respectively.

 A simple sufficient condition to ensure the uniform convergence in \eqref{eq:pw} is the existence of positive constants $C_1<C_2$ and $\ell$ such that,  for all $x\in S$, $C_1r^\ell\leq P(B(x,r))\leq  C_2r^\ell$.	
 \end{remark}
 
 	\section{Empirical results}\label{sec:emp}
 
 Here we evaluate the practical performance of some of the discussed estimators on different sets  $S\subset\mathbb{R}^d$ with differing Minkowski dimensions.  For this, we use random samples $\aleph_n=\{X_1,\dots,X_n\}$ generated uniformly on $S$. First, we consider $S$ to be hypercubes of side length one, for different values of $d$ and $\dim(S)$ (details are given below). This allows us to assess how the estimators perform in scenarios where the ambient space dimension ranges from low to moderate, and the Minkowski dimension ranges from similar to considerably lower than that of the ambient space. Then, following the approach of \citet{cam15}, we analyze the performance of the estimators on a synthetic benchmark. This benchmark comprises a set of 13 manifolds, linearly or nonlinearly embedded in higher dimensional spaces. Finally, we evaluate the performance of the considered estimators in the presence of noise in the data.

 Our objective is not to provide a comprehensive comparison of multiple existing methodologies for dimension estimation (for that, we refer the reader to the study by \citet{cam15}). Instead, we focus on the capacity estimator  
 \begin{equation*}\label{dimcap_est}
 	\widehat{\dim}_{\textnormal{cap}}=-\frac{\log(N_{\textnormal{sep}}(\aleph_n,r_n))}{\log(r_n)},
 \end{equation*}
 the correlation dimension estimator

 \begin{equation}\label{dimcd_est}
 	\widehat{\dim}_{\textnormal{cd}}=\frac{\log(\hat p_n(r_n))}{\log(r_n)}
 \end{equation}
 and the global estimator based on the plug-in  estimation  of the pointwise dimension   \begin{equation}\label{estpw}
 	\widehat{\dim}_{\textnormal{pw}}(x)=  \frac{\log({\mathbb P}_n(B(x,r_n)))}{\log(r_n)}.
 \end{equation}
 
 \sloppy
 Additionally, we include in our study the so-called box-counting estimator, commonly discussed in the literature when referring to fractal dimension estimators. 
 This estimator arises from  the fact that the Minkowski dimension 
 \begin{equation*}\label{bcdim}
 	\dim(S)=\lim_{r\to 0 }\frac{\log(N(S, r))}{\log(1/r)}.
 \end{equation*}
 can be equivalently formulated by replacing the covering number $N(S, r)$ with the minimal number of boxes of side length $r$ required to cover the set, denoted as $N_{\textnormal{box}}(S, r)$ (hence the commonly used term ``box-counting dimension''). For a discussion on the equivalence of this definition, see \citet{fal04}. Thus, another natural estimator for the Minkowski dimension is
 \begin{equation}\label{dimbc_est}
 	\widehat{\dim}_{\textnormal{bc}}=\frac{\log(N_{\textnormal{box}}(\aleph_n,r_n))}{\log(1/r_n)},
 \end{equation}
 where $r_n$ is an appropriate sequence of smoothing parameters $r_n\downarrow 0$. From a practical perspective, algorithms  have been developed in the literature to approximate $N_{\textnormal{box}}(\aleph_n,r_n)$. These box-counting algorithms typically involve placing a standard grid of boxes with side length $r_n$ over the embedding space and count the number of boxes containing at least one point from the sample. 

 The estimators $\widehat{\dim}_{\textnormal{bc}}$, $\widehat{\dim}_{\textnormal{cap}}$ and $\widehat{\dim}_{\textnormal{cd}}$ were computed using the R library {\textit{Rdimtools}}, see \citet{you22}, with the functions \verb|est.boxcount|, \verb|est.packing|, and \verb|est.correlation|, respectively.
 Regarding $\widehat{\dim}_{\textnormal{bc}}$, although \eqref{dimbc_est} requires evaluating a ratio (or quotient) of two terms for a carefully selected value of $r_n$, in practice, instead of directly evaluating this ratio, the box-counting dimension is typically estimated by determining the slope of a linear regression of $\log(N_{\textnormal{box}}(\aleph_n, r_n))$ versus $\log(1/r_n)$ over a suitable range of values of $r_n$. Thus, the implemented algorithm automatically selects the values of $r_n$ (50 by default) and handle extreme points internally, enhancing robustness. A similar approach is used for $\widehat{\dim}_{\textnormal{cd}}$, where, instead of computing the ratio in \eqref{dimcd_est} for a given value of $r_n$, the slope of a linear regression of $\log(\hat p_n(r_n))$ versus $\log(r_n)$ over a suitable range of values of $r_n$ is computed. In the case of $\widehat{\dim}_{\textnormal{cap}}$, the library {\textit{Rdimtools}} implements the scale-dependent capacity dimension estimator described in \citet{keg02}, where the values of $r_1$ and $r_2$ in the algorithm are also automatically selected. For further details on the implemented algorithms,  we refer  to the library's documentation.
 
 Regarding the pointwise dimension estimator in  \eqref{estpw}, although it primarily provides a local measure, we have already noted that a global estimator can be derived from it, defined as $\widehat{\dim}_{\textnormal{pw}}(S)=\sup_{x\in S}\widehat{\dim}_{\textnormal{pw}}(x)$. In practice, 
 we compute $\widehat{\dim}_{\textnormal{pw}}(X_i)$ for $i=1,\ldots,n$ and  estimate the Minkowski dimension of $S$ as the 0.9 quantile of these values. Using a  quantile leads to a more robust estimate compared to using the maximum, as it mitigates the influence of outliers and extreme values. As in the other estimators discussed previously, $\widehat{\dim}_{\textnormal{pw}}(X_i)$ is computed for $i=1,\ldots,n$, by fitting a linear regression to $\log({\mathbb P}_n(B(X_i,r_n)))$ versus $\log(r_n)$ over a suitable range of values of $r_n$.

 Table \ref{tab:SQ} summarizes the results for hypercubes with side length one, for various values of $d$ and $\dim(S)$. Specifically, for each set $S\subset\mathbb{R}^d$, we generated $B=50$ samples of size $n=2500$ uniformly on $S$. We report the mean value of each estimator across the $B$ samples, using the following terminology for the columns in the tables: BC for the box-counting estimator, CAP for the capacity estimator, CD for the correlation dimension estimator and PW for the global estimator based on the pointwise dimension estimation. In parentheses, we also show the proportion of times each estimator correctly identifies the corresponding Minkowski dimension, approximating the estimates to the nearest integer, as the methods may yield non-integer values. We observe that both the box-counting estimator and the capacity estimator tend to underestimate the Minkowski dimension, especially in higher dimensions. The correlation dimension estimator provides more accurate estimates of the Minkowski dimension. The pointwise dimension estimator achieves the best results across all dimensions and is the only one to attain 100\% accuracy under all conditions.
 
 \begin{table}[!ht]
 	\centering
 	\begin{tabular}{cccccc}
 		\hline
 		$d$ & $\dim(S)$ & BC & CAP & CD & PW \\ 
 		\hline
 		2 & 2 & 1.74 (1.00) & 1.74 (0.80) & 1.89 (1.00) & 2.15 (1.00) \\ 
 		\hline
 		3 & 3 & 2.44 (0.00) & 2.33 (0.26) & 2.86 (1.00) & 3.16 (1.00) \\ 
 		& 2 & 1.75 (1.00) & 1.74 (0.84) & 1.89 (1.00) & 2.16 (1.00) \\ 
 		\hline
 		4 & 4 & 3.04 (0.00) & 2.80 (0.02) & 3.77 (1.00) & 4.09 (1.00) \\ 
 		& 3 & 2.43 (0.00) & 2.34 (0.32) & 2.86 (1.00) & 3.16 (1.00) \\ 
 		& 2 & 1.75 (1.00) & 1.68 (0.78) & 1.89 (1.00) & 2.16 (1.00) \\ 
 		\hline
 		5 & 5 & 3.57 (0.00) & 3.05 (0.00) & 4.58 (0.80) & 5.01 (1.00) \\ 
 		& 4 & 2.92 (0.00) & 2.96 (0.04) & 3.76 (1.00) & 4.09 (1.00) \\ 
 		& 3 & 2.43 (0.00) & 2.40 (0.32) & 2.86 (1.00) & 3.16 (1.00) \\ 
 		& 2 & 1.75 (1.00) & 1.71 (0.78) & 1.90 (1.00) & 2.15 (1.00) \\ 
 		\hline
 		6 & 6 & 4.25 (0.00) & 3.41 (0.00) & 5.39 (0.10) & 5.86 (1.00) \\ 
 		& 5 & 3.44 (0.00) & 3.08 (0.00) & 4.60 (0.90) & 4.99 (1.00) \\ 
 		& 4 & 2.92 (0.00) & 2.83 (0.02) & 3.75 (1.00) & 4.10 (1.00) \\ 
 		& 3 & 2.43 (0.00) & 2.31 (0.20) & 2.86 (1.00) & 3.16 (1.00) \\ 
 		& 2 & 1.75 (1.00) & 1.76 (0.76) & 1.90 (1.00) & 2.15 (1.00) \\ 
 		\hline  
 		7 & 7 & 5.33 (0.00) & 3.91 (0.00) & 6.17 (0.00) & 6.66 (1.00) \\ 
 		& 6 & 3.86 (0.00) & 3.33 (0.00) & 5.41 (0.20) & 5.85 (1.00) \\ 
 		& 5 & 3.44 (0.00) & 3.17 (0.00) & 4.59 (0.78) & 5.01 (1.00) \\ 
 		& 4 & 2.92 (0.00) & 2.90 (0.06) & 3.75 (1.00) & 4.10 (1.00) \\ 
 		& 3 & 2.43 (0.00) & 2.29 (0.30) & 2.85 (1.00) & 3.15 (1.00) \\ 
 		& 2 & 1.75 (1.00) & 1.61 (0.64) & 1.89 (1.00) & 2.16 (1.00) \\ 
 		\hline
 	\end{tabular}
 	\caption{Mean values of the box-counting  estimator (BC), capacity  estimator (CAP), correlation dimension estimator (CD) and the global estimator based on the pointwise dimension (PW) over $B=50$ samples of size $n=2500$. Samples are generated uniformly on hypercubes $S\subset\mathbb{R}^d$ for $d=2,\ldots,7$, with Minkowski dimension ${\dim(S)=2,\ldots,d}$. In parentheses, the proportion of times that each estimator correctly estimates $\dim(S)$.}
 	\label{tab:SQ}
 \end{table}

 Table \ref{tab:HA} presents the results on the synthetic benchmark. In order to maintain the same conditions as in \citet{cam15}, we generated $B=20$ samples of size $n=2500$ uniformly on each  manifold $\mathcal{M}_i$, $i =1,\ldots, 13$. The manifolds considered cover a diverse range of structures. They include a $(d-1)$-dimensional sphere linearly embedded ($\mathcal{M}_1$),  affine spaces ($\mathcal{M}_2$ and $\mathcal{M}_9$), a 2-dimensional
 helix ($\mathcal{M}_5$), a Swiss-Roll ($\mathcal{M}_7$) and various nonlinear manifolds ($\mathcal{M}_4$, $\mathcal{M}_6$ and $\mathcal{M}_8$), among others. For a detailed description of the manifolds, we refer to Table 1 in \citet{cam15}, where the same notation is used to facilitate direct comparison. For generating the samples in the synthetic benchmark, we used  the tool\footnote{Available at \url{https://www.ml.uni-saarland.de/code/IntDim/IntDim.html}} developed alongside \citet{hei05}. The box-counting  estimator, which slightly outperforms in some instances the capacity  estimator for low-dimensional manifolds, exhibits a more erratic behavior when applied to higher-dimensional manifolds. The correlation dimension estimator and the pointwise dimension estimator show the best performance overall. Even so, while both tend to underestimate the true dimension in high-dimensional cases, the pointwise dimension estimator seems to perform slightly better in some instances (e.g., $\mathcal{M}_1$, $\mathcal{M}_9$, $\mathcal{M}_{10}$, or $\mathcal{M}_{12}$).

 \begin{table}[!ht]
 	\centering
 	\begin{tabular}{cccrrrr}
 		\hline
 		Manifold&$d$ & $\dim(\mathcal{M}_i)$ & \multicolumn{1}{c}{BC} & \multicolumn{1}{c}{CAP} & \multicolumn{1}{c}{CD} & \multicolumn{1}{c}{PW} \\ 
 		\hline
 		$\mathcal{M}_1$ & 11 & 10 & 10.48 (0.55) & 6.65 (0.00) & 9.06 (0.00) & 9.99 (1.00) \\ 
 		$\mathcal{M}_2$ & 5 & 3 & 2.3 (0.00) & 2.2 (0.15) & 2.89 (1.00) & 3.15 (1.00) \\ 
 		$\mathcal{M}_3$ & 6 & 4 & 2.68 (0.00) & 2.7 (0.05) & 3.59 (0.90) & 4.36 (0.95) \\ 
 		$\mathcal{M}_4$ & 8 & 4 & 4.11 (1.00) & 4.07 (0.90) & 3.79 (1.00) & 4.36 (1.00) \\  
 		$\mathcal{M}_5$ & 3 & 2 & 1.85 (1.00) & 1.68 (0.85) & 1.99 (1.00) & 2.15 (1.00) \\  
 		$\mathcal{M}_6$ & 36 & 6 & 12.15 (0.00) & 5.56 (0.55) & 5.79 (0.95) & 7.27 (0.00) \\ 
 		$\mathcal{M}_7$ & 3 & 2 &  2.1 (1.00) & 2.41 (0.60) & 1.98 (1.00) & 2.15 (1.00) \\  
 		$\mathcal{M}_8$ & 72 & 12 & 25.03 (0.00) & 8.54 (0.00) & 11.69 (0.90) & 16.02 (0.00) \\ 
 		$\mathcal{M}_9$ & 20 & 20 & 26.88 (0.00) & 8.74 (0.00) & 14.43 (0.00) & 16.51 (0.00) \\ 
 		$\mathcal{M}_{10}$ & 11 & 10 & 2.45 (0.00) & 5.43 (0.00) & 8.34 (0.00) & 9.12 (0.05) \\ 
 		$\mathcal{M}_{11}$ & 3 & 2 &  1.95 (1.00) & 2.65 (0.50) & 2.02 (1.00) & 2.16 (1.00) \\ 
 		$\mathcal{M}_{12}$ & 20 & 20 & 10.71 (0.00) & 7.16 (0.00) & 14.04 (0.00) & 18.95 (0.00) \\
 		$\mathcal{M}_{13}$ & 13 & 1 &0.96 (1.00) & 1.26 (0.95) & 1.25 (1.00) & 1.25 (1.00) \\ 
 		\hline
 	\end{tabular}
 	\caption{Mean values of the box-counting dimension estimator (BC), capacity dimension estimator (CAP), correlation dimension estimator (CD) and the global estimator based on the pointwise dimension (PW) over $B=20$ samples of size $n=2500$. Samples are generated uniformly on manifolds $\mathcal{M}_{i}$, $i=1,\ldots, 13$ in $\mathbb{R}^d$ with Minkowski dimension $\dim(\mathcal{M}_{i})$. In parentheses, the proportion of times that each estimator correctly estimates $\dim(\mathcal{M}_{i})$.}
 	\label{tab:HA}
 \end{table}

 To conclude, we analyze the behavior of the estimators in a noisy context. More specifically, we again consider $S$ to be hypercubes of side length one, for different values of $d$ and $\dim(S)$. Unlike the previous scenarios, this time, we add \(d\)-dimensional Gaussian noise \(N_d(0, \sigma^2 I_d)\) to the uniform samples $\aleph_n=\{X_1,\dots,X_n\}$, for various values of \(\sigma\). The results are shown in Figure \ref{fig:noise}. Note that in the noiseless model with \(\sigma = 0\) (solid line), the points coincide with the values in Table \ref{tab:SQ}. Furthermore, in this context, a perfect estimate would result in the points lying on the diagonal in each plot. The pointwise dimension estimator comes closest to this ideal behavior, followed by the correlation dimension estimator. For the other two estimators, as previously mentioned, it can be observed that they both tend to underestimate the true dimension. On the other hand, as expected, when noise is introduced, the estimated values increase, indicating that the data now  reside  in the ambient dimension. Nonetheless, for low noise levels, the pointwise dimension estimator still provides reasonable estimates of \(\dim(S)\), demonstrating its robustness in the presence of noise.

 \begin{figure}[!h]  
 	\centering  
 	\includegraphics[width=0.79\textwidth]{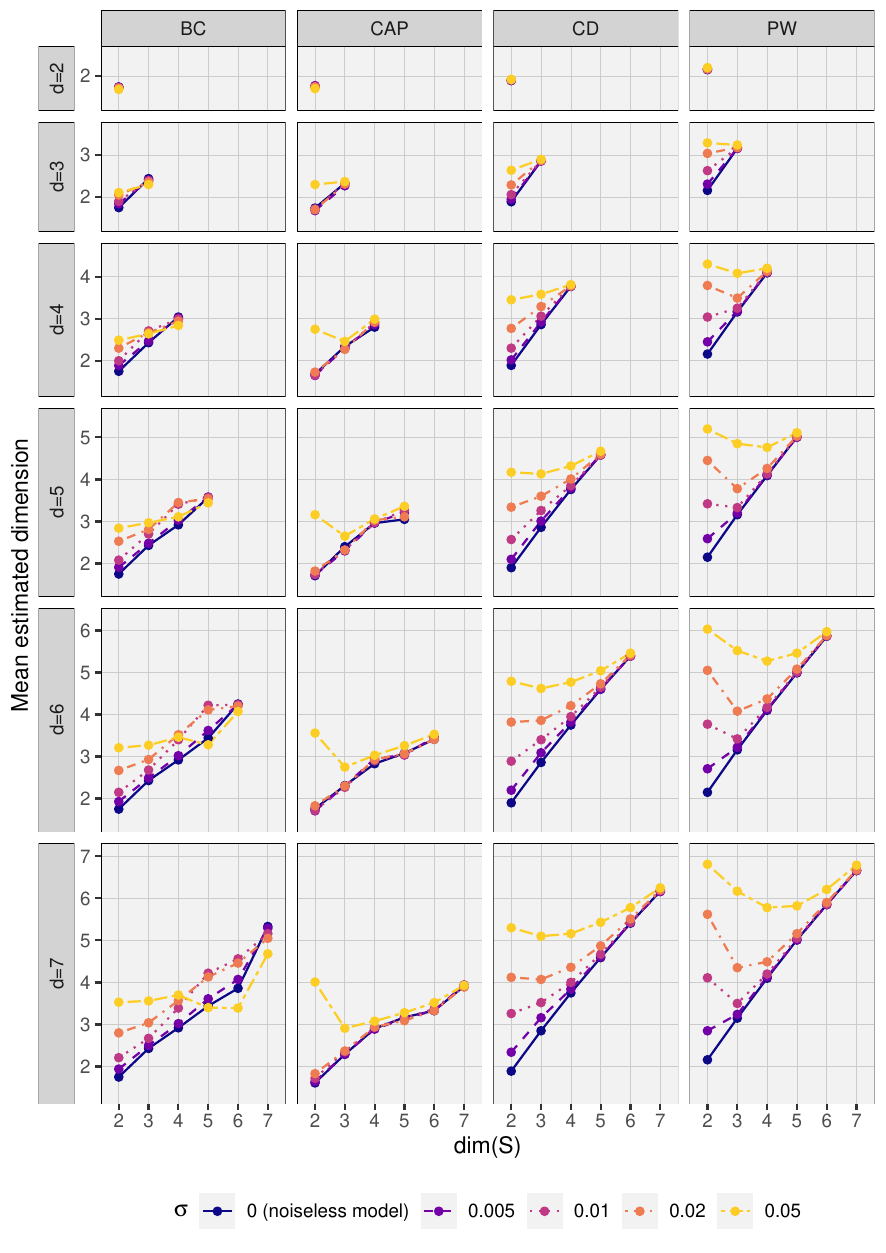}  
 	\caption{Mean values of the box-counting  estimator (BC), capacity  estimator (CAP),  correlation dimension estimator (CD)  and the global estimator based on the pointwise dimension (PW) over $B=50$ samples of size $n=2500$. For the noiseless model (solid lines), samples are generated uniformly on hypercubes $S\subset\mathbb{R}^d$ for $d=2,\ldots,7$, with Minkowski dimension $\dim(S)=2,\ldots,d$. For the noisy settings, $d$-dimensional Gaussian noise $N_d(0,\sigma^2 I_d)$ is added to the the samples with $\sigma=0.005$ (dashed lines), $\sigma=0.01$ (dotted lines), $\sigma=0.02$ (dot-dashed lines), and $\sigma=0.05$ (two-dashed lines).}  
 	\label{fig:noise}  
 \end{figure}

 \section{Conclusions}\label{sec:conclusions}
 
 We have proved consistency results and convergence rates, for three estimators of the dimension of a set $S$ proposed in \citet{keg02},   \citet{gras83} and \citet{you82} and when the sample data is made of random observations on $S$. Here ``consistency'' must be understood in the statistical sense, meaning stochastic convergence to the true value as the sample size tends to infinity. Our methodology is based on techniques similar to those typically employed in nonparametrics, including the use of a sequence smoothing parameters $r_n$ tending to zero ``slowly enough''. Our proofs crucially rely on some auxiliary ``volume based'' estimators, defined in terms of the volume of the $r_n$-dilated sample. Under the, not very restrictive, assumption that the underlying set $S$ has a polynomial volume function on some compact interval $[0,\mathbf R]$, we are able to derive a more informative results with a explicit choice of the sequence of smoothing parameters.
 
 From a more practical point of view, our experimental results suggest that the pointwise dimension estimator proposed in \citet{you82} might be a competitive choice in the dimension estimation problem. 
 
 It should be noted that the aim of the theoretical results in Section \ref{sec:consistencia} is different from (though supplemental to) that of the empirical study in Section \ref{sec:emp}. In Section \ref{sec:consistencia} we are concerned with asymptotic results. So, the goal is to make sure than we are in fact using consistent estimators and to establish the order of convergence in the smoothing parameters $r_n$ that would ensure consistency. No attempt is made to establish optimality for the choice of $r_n$. This would be a much more complicated issue, worth of further study. In the empirical Section \ref{sec:emp} we analyse, via simulations, the practical performance of the considered estimators for a given sample size. Thus,  the use the data-driven smoothing methods implemented in the available software, and considered in the previous literature, appeared as the most sensible choice. 
 
 \section*{Acknowledgments}
 This work of A. Cholaquidis was supported by Grants: CSIC I+D 2022, Comisi\'on Sectorial de Investigaci\'on Cient\'{\i}fica - Universidad de la Rep\'ublica. The research of A. Cuevas has been supported by Grants  
 PID2023-14808-1NB100 from the Spanish Ministry of Science and Innovation and Grant
 CEX2023-001347-S funded by MI-
 CIU/AEI/10.13039/501100011033.  B. Pateiro-L\'opez acknowledges the support of Grants PID2020-116587GB-I00 and ED431C 2021/24, the latter from Conseller\'{i}a de Cultura, Educaci\'{o}n e Universidade.

 \section{An auxiliary result regarding the definition of $\dim_\textnormal{cd}$}
 The definition of $\dim_\textnormal{cd}$ depends (unlike other popular notions of dimension)  on a given probability measure $P$. The following lemma, mentioned in the main document immediately after the definition of $\dim_\textnormal{cd}$, states that the final output is in fact the same for all possible choices of $P$ fulfilling some regularity conditions.

 \begin{lemma}\label{lemma:cd}  Assume  that there exists  a Borel measure $\nu$ and positive constants $C_1<C_2$, $\ell$ and $r_0$ such that,  for a given Borel set $S$, we have for all $x\in S$, $C_1r^\ell\leq \nu(B(x,r))\leq  C_2r^\ell$, for all $r<r_0$. Assume also that $P$ fulfils the ``standardness'' condition $P(B(x,r))\geq \gamma \nu(B(x,r))$ for some $\gamma>0$  and $r$ small enough (say $r<r_0$). Suppose finally that $P$ has a bounded density $g$ with respect to $\nu$.  Then,  	$\dim_{\textnormal{cd}}(P)=\ell$.
 \end{lemma}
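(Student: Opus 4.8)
The plan is to show that the correlation integral $p(r)=\mathbb{P}(\|X_1-X_2\|<r)=\int P(B(x,r))\,dP(x)$ is sandwiched between two constant multiples of $r^\ell$ for $r$ small, and then take logarithms and divide by $\log r$. Concretely, I would write $p(r)=\int_S P(B(x,r))\,g(x)\,d\nu(x)$ using that $P\ll\nu$ with bounded density $g$, and separately use the standardness hypothesis $P(B(x,r))\ge\gamma\,\nu(B(x,r))$ together with the two-sided bound $C_1 r^\ell\le\nu(B(x,r))\le C_2 r^\ell$.

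For the upper bound I would estimate $P(B(x,r))\le \|g\|_\infty\,\nu(B(x,r))\le \|g\|_\infty C_2 r^\ell$, so that $p(r)\le \|g\|_\infty C_2 r^\ell\cdot P(S)=\|g\|_\infty C_2 r^\ell$. For the lower bound I would use $p(r)=\int_S P(B(x,r))\,dP(x)\ge \gamma\int_S \nu(B(x,r))\,dP(x)\ge \gamma C_1 r^\ell \cdot P(S)=\gamma C_1 r^\ell$. Thus there are positive constants $A_1\le A_2$ and $r_1>0$ with $A_1 r^\ell\le p(r)\le A_2 r^\ell$ for all $r<r_1$. Taking logarithms, $\log A_1+\ell\log r\le \log p(r)\le \log A_2+\ell\log r$, and dividing by $\log r<0$ (for $r<1$) and letting $r\to 0$ gives $\lim_{r\to 0}\log p(r)/\log r=\ell$, i.e.\ $\dim_{\textnormal{cd}}(P)=\ell$.

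I do not anticipate a serious obstacle here; the argument is essentially a routine squeeze. The one point requiring a little care is making sure the density bound is used in the correct direction (only the upper bound on $g$ is needed, and only for the upper estimate on $p(r)$), and that the standardness inequality $P(B(x,r))\ge\gamma\,\nu(B(x,r))$ is invoked only for the lower estimate, so that the two hypotheses combine cleanly without circularity. A minor technical remark is that all the stated inequalities on $\nu(B(x,r))$ hold uniformly in $x\in S$ for $r<r_0$, which is exactly what lets the bounds on $p(r)$ hold with constants independent of $r$; I would just intersect the relevant ranges of validity and take $r_1=\min(r_0,1)$.
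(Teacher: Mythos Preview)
Your argument is correct, and in fact your upper bound is more elementary than the paper's. Both proofs obtain the lower bound in the same way, from $P(B(x,r))\ge\gamma\,\nu(B(x,r))\ge\gamma C_1 r^\ell$. For the upper bound, however, the paper writes
\[
p(r)\le C_2 r^\ell \int_S \frac{1}{\nu(B(x,r))}\int_{B(x,r)} g(y)\,d\nu(y)\,dP(x),
\]
observes that the two-sided bound on $\nu(B(x,r))$ makes $\nu$ a doubling measure, invokes the Lebesgue differentiation theorem in metric measure spaces (Heinonen, Th.~1.8) to get the inner average converging to $g(x)$, and then applies dominated convergence. Your route bypasses all of this: since $P(B(x,r))=\int_{B(x,r)} g\,d\nu\le \|g\|_\infty\,\nu(B(x,r))\le \|g\|_\infty C_2 r^\ell$ uniformly for $x\in S$, integrating against $dP$ gives the upper bound immediately. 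The paper's detour through the differentiation theorem yields the slightly sharper asymptotic constant $\int g\,dP$ in place of $\|g\|_\infty$, but for the purpose of computing $\dim_{\textnormal{cd}}(P)$ this is irrelevant, and your direct bound is both shorter and avoids the external reference.
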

 \begin{proof}  First, observe that,
 	
 	\begin{align*}
 		p(r)&=\int_S\mathbb{P}(d(X_1,x)<r|X_2=x)dP(x)\\
 		&=\int_{M}  \nu(B(x,r)) \frac{1}{\nu(B(x,r))} P(B(x,r)) dP(x).
 	\end{align*}
 	Using the standardness assumption we have for $r<r_0$,
 	\begin{align*}
 		\gamma c_1 r^\ell  \leq p(r)&\leq  C_2r^\ell \int_{S}  \frac{1}{\nu(B(x,r))} P(B(x,r)) dP(x)\\ 
 		& =C_2r^\ell \int_{S}  \Bigg[\frac{1}{\nu(B(x,r))} \int_{B(x,r)}g(y)d\nu(y)\Bigg]dP(x).
 	\end{align*}
 	Since  $c_1r^\ell\leq \nu(B(x,r))\leq  C_2r^\ell$, $\nu$ is a doubling measure. Then, by Theorem 1.8 in \citet{heinonen} 
 	$$\Bigg[\frac{1}{\nu(B(x,r))} \int_{B(x,r)}g(y)d\nu(y)\Bigg]\to g(x),$$
 	for almost all $x\in S$ with respect to $\nu$.    Then, by the Dominated Convergence Theorem 
 	$$\int_{S}  \Bigg[\frac{1}{\nu(B(x,r))} \int_{B(x,r)}g(y)d\nu(y)\Bigg]dP(x)\to \int_S  g(x) d\nu(x)=L>0.$$
 	Finally, for $r$ small enough, $	\gamma C_1 r^\ell  \leq p(r)\leq 2C_2Lr^\ell$, from where it follows that $\log(p(r))/\log(r)\to \ell$.
 	
 \end{proof}


\begin{thebibliography}{9}
 
 
 
 \bibitem[\protect\citeauthoryear{Aaron, Cholaquidis, Cuevas}{2017}]{aar17}
 \rm{Aaron, C., Cholaquidis, A. and Cuevas, A.} (2017).
 Detection of low dimensionality and data denoising via set estimation techniques.
 \textit{Electronic Journal of Statistics}, \textbf{11}(2), 4596--4628.
 
 \bibitem[\protect\citeauthoryear{Bishop and Peres}{2017}]{bis17}
 Bishop, C. J., and Peres, Y. (2017). \textit{Fractals in probability and analysis} (Vol. 162). Cambridge University Press.  
 
 \bibitem[\protect\citeauthoryear{Block et al.}{2022}]{blo22}
 Block, A., Jia, Z., Polyanskiy, Y. and Rakhlin, A. (2022). Intrinsic dimension estimation using Wasserstein distance. \textit{Journal of Machine Learning Research}, \textbf{23(313)}, 1-37.
 
 \bibitem[\protect\citeauthoryear{Brito et al.}{2013}]{bri13}
 Brito, M. R., Quiroz, A. J., and Yukich, J. E. (2013). Intrinsic dimension identification via graph-theoretic methods. \textit{Journal of Multivariate Analysis}, \textbf{116}, 263-277.
 
 \bibitem[\protect\citeauthoryear{Camastra and Staiano}{2016}]{cam16}
 Camastra, F., and Staiano, A. (2016). Intrinsic dimension estimation: Advances and open problems. \textit{Information Sciences}, \textbf{328}, 26--41.
 
 \bibitem[\protect\citeauthoryear{Campadelli et al.}{2015}]{cam15}
 Campadelli, P., Casiraghi, E., Ceruti, C., and Rozza, A. (2015). Intrinsic Dimension Estimation: Relevant Techniques and a Benchmark Framework. \textit{Mathematical Problems in Engineering}, \textbf{759567}, 1--21.
 
  	\bibitem[\protect\citeauthoryear{Campadelli et al.}{2015}]{cam15}
 Campadelli, P., Casiraghi, E., Ceruti, C., and Rozza, A. (2015). Intrinsic Dimension Estimation: Relevant Techniques and a Benchmark Framework. \textit{Mathematical Problems in Engineering}, \textbf{759567}, 1--21.
 
 \bibitem[\protect\citeauthoryear{Falconer}{2004}]{fal04}
 Falconer, K. (2004). \textit{Fractal geometry: Mathematical Foundations and Applications}. Wiley.
 
 \bibitem[\protect\citeauthoryear{Hein and Audibert}{2005}]{hei05}
 Hein, M. and Audibert, Y. (2005). Intrinsic Dimensionality Estimation of Submanifolds in Euclidean space. \textit{Proceedings of the 22nd International Conference on Machine Learning}, 289--296.
 
 \bibitem[\protect\citeauthoryear{Heinonen}{2001}]{heinonen}
 \rm{Heinonen, J.} (2001).
 \textit{ Lectures on Analysis on Metric Spaces}. 
 Springer.
 
 \bibitem[\protect\citeauthoryear{K\'egl}{2002}]{keg02}
 K\'egl, B. (2002). Intrinsic dimension estimation using packing numbers. \textit{Advances in Neural Information Processing Systems}, vol. 15.
 
 
 \bibitem[\protect\citeauthoryear{You and Shung}{2022}]{you22}
 You, K. and Shung, D. (2022). Rdimtools: An R package for dimension reduction and intrinsic dimension estimation. \textit{Software Impacts}, \textbf{14}, 100414.
 
 
 
 
 
 \bibitem[\protect\citeauthoryear{Cholaquidis \textit{et al.}}{2024}]{ch23}
 \rm{Cholaquidis, A., Cuevas, A. and Moreno, L.} (2024)
 On the notion of polynomial reach: A statistical application. \textit{Electronic Journal of Statistics}, \textbf{18}, 3437--3460.
 
 \bibitem[\protect\citeauthoryear{Cholaquidis \textit{et al.}}{2014}]{ch14}
 Berrendero, J. R., Cholaquidis, A., Cuevas, A., and Fraiman, R. (2014). 
 A geometrically motivated parametric model in manifold estimation. \textit{Statistics}, \textbf{48(5)}, 983-1004.
 
 
 \bibitem[\protect\citeauthoryear{Cuevas and Pateiro-L\'opez}{2018}]{cuepat18}
 \rm{Cuevas, A. and Pateiro-L\'opez, B.} (2018). 
 Polynomial volume estimation and its applications. 
 \textit{Journal of Statistical Planning and Inference}, \textbf{196}, 174--184.
 
 \bibitem[\protect\citeauthoryear{Cuevas and Rodr\'{i}guez-Casal}{2004}]{cue04}
 Cuevas, A. and  Rodr\'{i}guez-Casal, A. (2004). On boundary estimation. \textit{Advances in Applied Probability}, \textbf{36(2)}, 340--354.
 
 
 \bibitem[\protect\citeauthoryear{Devroye et al.}{1996}]{pattern}
 Devroye, L., Gy{\"o}rfi, L. and Lugosi, G. (1996). 
 \textit{A Probabilistic Theory of Pattern Recognition}. 
 Springer.
 
 \bibitem[\protect\citeauthoryear{Falconer}{2004}]{fal04}
 Falconer, K. (2004). \textit{Fractal geometry: Mathematical Foundations and Applications}.
 Wiley.
 
 
 \bibitem[\protect\citeauthoryear{Federer}{1959}]{fed59}
 \rm{Federer, H.} (1959).
 Curvature measures.
 \textit{Transactions of the American Mathematical Society}  \textbf{93} 418--491.
 
 \bibitem[\protect\citeauthoryear{Federer}{1969}]{fed:69}
 \rm{Federer, H.} (1969).
 \textit{Geometric Measure Theory}.
 {Springer}.
 
 \bibitem[\protect\citeauthoryear{Fefferman et al.}{2016}]{fef16}
 Fefferman, C., Mitter, S. and Narayanan, H. (2016). Testing the manifold hypothesis. \textit{Journal of the American Mathematical Society} \textbf{29(4)}, 983--1049.
 
 \bibitem[\protect\citeauthoryear{Genovese et al.}{2012a}]{gen12a}
 Genovese, C.R., Perone-Pacifico, M., Verdinelli, I. and Wasserman, L. (2012a).
 The geometry of nonparametric filament estimation. 
 \textit{Journal of the American Statistical Association} \textbf{107}, 788--799.
 
 \bibitem[\protect\citeauthoryear{Genovese et al.}{2012b}]{gen12b}
 Genovese, C.R., Perone-Pacifico, M., Verdinelli, I. and Wasserman, L. (2012b).
 Minimax Manifold Estimation.
 \textit{Journal of Machine Learning Research} \textbf{13}, 1263--1291.
 
 \bibitem[\protect\citeauthoryear{Genovese et al}{2012c}]{gen12c}
 Genovese, C.R., Perone-Pacifico, M., Verdinelli, I. and Wasserman, L. (2012c).
 Manifold estimation and singular deconvolution under Hausdorff loss.
 \textit{Annals of Statistics} \textbf{40}, 941--963
 
 \bibitem[\protect\citeauthoryear{Grassberger and Procaccia}{1983}]{gras83}
 Grassberger, P. and Procaccia, I. (1983). Measuring the Strangeness of Strange Attractors. \textit{Physica D: Nonlinear Phenomena.} \textbf{9}, 189-208.
 
 
 \bibitem[\protect\citeauthoryear{Hastie and Stuetzle}{1989}]{has89}
 Hastie, T. and Stuetzle, W. (1989).
 Principal curves. 
 \textit{Journal of the American Statistical Association} \textbf{84}, 502--516.
 
 \bibitem[\protect\citeauthoryear{Hausdorff}{1919}]{haus19}
 Hausdorff, F. (1919). Dimension und \"au\ss eres Ma\ss. \textit{Math. Ann} \textbf{79}, 157-179.
 
 
 
 
 
 \bibitem[\protect\citeauthoryear{K\'egl}{2002}]{keg02}
 K\'egl, B. (2002). Intrinsic dimension estimation using packing numbers. \textit{Advances in Neural Information Processing Systems}, vol. 15.
 
 \bibitem[\protect\citeauthoryear{Kim et al.}{2019}]{kim19}
 Kim, J., Rinaldo, A. and Wasserman, L. Minimax rates for estimating the dimension of a manifold. \textit{Journal of Computational Geometry}, \textbf{10(1)}, 42--95.
 
 
 \bibitem[\protect\citeauthoryear{Pennec}{2006}]{pen06}
 Pennec, X. (2006). Intrinsic statistics on Riemannian manifolds: Basic tools for geometric measurements. \textit{Journal of Mathematical Imaging and Vision}, \textbf{25}, 127-154.
 
 \bibitem[\protect\citeauthoryear{Pesin}{1993}]{pes93}
 Pesin, Y. B. (1993). On rigorous mathematical definitions of correlation dimension and generalized spectrum for dimensions. \textit{Journal of Statistical Physics}, \textbf{71}, 529-547.
 
 
 \bibitem[\protect\citeauthoryear{Qiu et al.}{2022}]{qiu22}  
 Qiu, H., Yang, Y. and Rezakhah, S. (2022). Intrinsic dimension estimation method based on correlation
 dimension and kNN method.\textit{ Knowledge-Based Systems} 235 107627.
 
 \bibitem[\protect\citeauthoryear{Rataj and Winter}{2010}]{rataj10}
 \rm{Rataj, J. and Winter, S.}(2010). On Volume and Surface Area of Parallel Sets.  \textit{Indiana University Mathematics Journal} \textbf{59}, 1661--1685.
 
 \bibitem[\protect\citeauthoryear{Serfling}{2009}]{serfling}
 Serfling, Robert J. (2009). \textit{Approximation Theorems of Mathematical Statistics}. John Wiley \& Sons.
 
 
 
 \bibitem[\protect\citeauthoryear{Stacho, L. L.}{1976}]{st76} 
 \rm{Stach\'o, L. L.}(1976)
 On the volume function of parallel sets. 
 \textit{Acta Scientiarum Mathematicarum}, \textbf{38}, 365--374.
 
 
 
 %
 %
 %
 \bibitem[\protect\citeauthoryear{Young}{1982}]{you82}
 Young, L. S. (1982). Dimension, entropy and Lyapunov exponents. \textit{Ergodic Theory and Dynamical Systems}, 2(1), 109-124.
 
 
 
 
 
 
 
 \end{thebibliography}
\end{document}